\newtheorem{preproof}{{\bf \indent Proof.}}
\newenvironment{proof}[1]{\begin{preproof}{\rm
               #1}\hfill{$\Box$}}{\end{preproof}}
\newtheorem{prop}{\bf\indent Proposition}[section]
\newtheorem{defn}[prop]{\bf\indent Definition}
\newtheorem{cor}[prop]{\bf\indent Corollary}
\newtheorem{example}[prop]{\bf\indent Example}
\newtheorem{thm}[prop]{\bf\indent Theorem}
\newtheorem{remark}[prop]{\bf\indent Remark}
\newtheorem{lem}[prop]{\bf\indent Lemma}
\title{\bf \Large  Integral closures, Primary Hyperideals and Hypervaluation Hyperideals of Kranser Hyperrings \thanks
{{\it Key Words}:  Krasner hyperrings, Integral closure, Hypervaluation Hyperideals, Primary Hyperideals} \thanks
{{\it AMS Mathematics Subject Classification(2000)}: 16Y99, 20N20.
}}
\author{{\normalsize  {\sc M. J. Nikmehr${}^{\mathsf{a}}$, {\sc R. Nikandish${}^{\mathsf{b}}$} and {\sc A. Yassine${}^{\mathsf{a}}$}  }
}\vspace{3mm}\\
{\footnotesize{${}^{\mathsf{a}}$\it Faculty of Mathematics, K.N. Toosi
University of Technology, }}\\
{\footnotesize{\rm P.O. BOX \rm{16315-1618}, Tehran, Iran}}\\
{\footnotesize{ $\mathsf{nikmehr@kntu.ac.ir}$}}\quad\quad
{\footnotesize{$\mathsf{yassine\_ali@email.kntu.ac.ir}$}}\\
{\footnotesize{${}^{\mathsf{b}}$\it Department of Mathematics, Jundi-Shapur University of Technology,}}\\
{\footnotesize{\rm P.O. BOX \rm{64615-334},
Dezful, Iran}}\\
{\footnotesize{ $\mathsf{r.nikandish@ipm.ir}$}}\\
{\footnotesize{$\mathsf{}$ }}}
\date{}
\begin{document}

\maketitle
\begin{abstract}
{In this paper, the notions of integral closure of hyperrings and hyperideals in a Krasner hyperring $(R, +, \cdot)$ are defined and some basics properties of them are studied. We define also the notion of hypervaluation hyperideals and then a relations between hypervaluations, integral closure of hyperideals and primary hyperideals are studied. In fact it is shown that the integral closure of a hyperideal is determined by the hypervaluation Krasner hyperrings.}
\end{abstract}
\begin{center}{\section{Introduction
}}\end{center}
\par Algebraic hyperstructure theory is a well generalization of classical algebraic theory and it was introduced by Marty in 1934 \cite{Marty} at the 8th Congress of Scandinavian Mathematicians.  In a classical algebraic structure, the composition of two elements is an element, while in an algebraic hyperstructure, the composition of two elements is a set. Since then, several mathematicians studied Algebraic hyperstructures, see \cite{Corsini,Leoreanu,Davvaz,Davvaz3,Davvaz2,Vougiouklis}. Marty introduced the concept of hyperoperations on  hypergroups which are generalization of groups, and this led to many concepts of hyperrings. Hyperrings are hyperstructures together with two binary (hyper)operations, addition and multiplication, in which addition or multiplication is a hyperoperation. For examples, $(R,+,\cdot )$ is a general hyperring when the addition and the multiplication are two hyperoperations such that $(R,+)$ is a hypergroup and $(R,\cdot )$ is a semihypergroup and the multiplication is distributive over the addition, we have also multiplicative hyperrings. Another type of hyperring is the additive hyperring, if only the addition is a hyperoperation and the multiplication is a usual operation. One of the additive hyperrings is the Krasner hyperring which was introduced by Krasner in 1983 \cite{Krasner}. We refer the reader to \cite{isomorphism,Davvaz,Salasi,DV} for the notions of hyperrings. In this paper, we will take hyperring to mean Krasner hyperring unless otherwise specified.

\par Integral closure play a central role in algebraic geometry and number theory, it is a formulation of ideals began in the 1930's with the work of Krull and Zariski. The integral closure of ideals arise in commutative algebra upon understanding the growth of ideals, and there are many approaches lead to the concept of integral closure of ideals. One of these approaches is to finde an element $c$ such that for all large $n$, $cr^n \in I^n$ (for more on integral closure see \cite{Atiyah,Swanson}).

 Hypervaluation on a hyperring was introduced by Davvaz and Salasi in \cite{Salasi} which is a very useful tool in hyperring theory. They stated and proved some properties relating to these concepts. In Section 3 of this article, at first we define the notions of the integral closure of hyperrings and hyperideals and by considering these notions we obtain some results. In Section 4, we define hypervaluation hyperideals and we prove some interesting results concerning this concept. We show that the integral closure of a hyperideal is determined by the hypervaluation hyperrings, and we present the relationship between hypervaluations and primary hyperideals.

\begin{center}{\section{Preliminaries
}}\end{center}
\par In this section, we give the definition of all notations that will be used in this paper.
\begin{defn} $(${\rm\cite[Definition 3.1.1]{Davvaz}}$)$
A Krasner hyperring is an algebraic hyperstructure $(R,+,\cdot )$ which satisfies the following axioms:

{\bf(1)} $(R, +)$ is a canonical hypergroup, i.e.,
\begin{enumerate}

\item for every $x,y,z \in R$, $x+(y+z) = (x+y)+z$,

\item for every $x,y \in R$, $x+y = y+x$,

\item there exists $x\in R$ such that $0+x=\{x\}$ for every $x \in R$,

\item for every $x \in R$ there exists a unit element $x' \in R$ such that $0\in x+x'$; $x'$ is the opposite of $x$ and we shall write $-x$ for $x'$,

\item $z\in x+y$ implies $y\in -x+z$ and $x\in z-y$.
\end{enumerate}

{\bf(2)} $(R,\cdot )$ is a semigroup having zero as a bilaterally absorbing element, i.e., $x \cdot 0 = 0 \cdot x = 0$.

{\bf(3)} The multiplication is distributive with respect to the hyperoperation $+$.
\end{defn}

We call $0$ the zero of the Krasner hyperring $(R,+,\cdot )$. For $x \in R$, let $-x$ denote the unique inverse of $x$ in $(R, +)$. Then $-(-x) = x$, for all $x \in R$. In addition, $(x+y)\cdot (z+w) \subseteq x\cdot z+x\cdot w+y\cdot z+y\cdot w$, $(-x)\cdot y = x \cdot (-y) = -(x\cdot y)$, for every $x,y,z,w \in R$. A Krasner hyperring $R$ is called commutative (with unit element) if $(R,\cdot )$ is a commutative semigroup (with unit element).

In this paper all hyperrings are commutative with unit element.
\begin{defn} $(${\rm\cite[Definition 3.1.2]{Davvaz}}$)$
{\bf(1)} $(R,+,\cdot )$ is called a Krasner hyperfield if $(R,+,\cdot )$ is a Krasner hyperring and $(R,\cdot )$ is a group.

{\bf(2)} A Krasner hyperring $(R,+,\cdot )$ is called a hyperdomain if $R$ is a commutative hyperring with unit element and $ab=0$ implies that $a=0$ or $b=0$ for all $a,b\in R$.
\end{defn}
\begin{defn} $(${\rm\cite{Davvaz}}$)$
{\bf(1)} A nonempty set $I$ of a Krasner hyperring $R$ is called a hyperideal if for all $a,b \in I$ and $r\in R$ we have $a-b \subseteq I$ and $a\cdot r \in I$.

{\bf(2)} A proper hyperideal $P$ of a Krasner hyperring $R$ is called a prime hyperideal if whenever $ab \in P$, either $a \in P$ or $b \in P$.

{\bf(3)} Let $R$ and $S$ be Krasner hyperrings. A mapping $f: R \rightarrow S$ is called a homomorphism if for all $a,b \in R$, $f(a + b) \subseteq f(a) + f(b)$ and $f(a \cdot b) = f(a) \cdot f(b)$.
\end{defn}

\begin{defn} $(${\rm\cite{Ramaruban}}$)$
{\bf(1)} Let $R$ be a Krasner hyperring. An $R$-hypermodule $M$ is a commutative hypergroup with respect to addition, together with a map $R \times M \rightarrow M$, given by $(r,m) \rightarrow r \cdot m = rm \in M$, such that for all $a,b \in R$ and $m_1, m_2 \in M$ we have:

\begin{enumerate}

\item $(a+b)m_1 = am_1+bm_1$,

\item $a(m_1+m_2) = am_1+am_2$,

\item  $(ab)m_1 = a(bm_1)$,

\item $a0_M = 0_Rm_1 = 0_M$, where $0_M, 0_R$ are the zero elements of $M$ and $R$, respectively,

\item  $1m_1 = m_1$, where $1$ is the multiplicative identity in $R$.
\end{enumerate}

{\bf(2)} A $R$-hypermodule $M$ is called finitely generated, if there exists a finite subset $\{x_1, x_2, \ldots , x_n\}$ of $M$ such that $M = \{z : \exists$ $r_1, \ldots , r_n \in R, n \in \mathbb{N}$ such that $z \in \sum_{i=1}^nr_ix_i\}$. The set $\{x_1, x_2, \ldots , x_n\}$ is called the generating set.
\end{defn}

\begin{center}{\section{Integral closure of Krasner hyperrings and hyperideals
}}\end{center}
\par The principal purpose of this section is to generalize the concept of integral closure of rings and ideals for Krasner hyperrings and hyperideals and we study some of their properties. Before presenting the notion of integral closedness for Krasner hyperrings and hyperideals, we remind some necessary concepts. Let $R$ be a subring of a ring $R'$. An element $r$ of $R'$ is said to be integral over $R$ if there exist an integer $n$ and elements $a_i \in R$, $i = 1, \ldots ,n$, such that $r^n + a_1r^{n-1} + a_2r^{n-2} + \cdots + a_{n-1}r + a_n = 0$. If every element of $R'$ is integral over $R$ we say that $R'$ is integral over $R$. If the elements of $R$ are the only elements of $R'$ which are integral over $R$, we say that $R$ is integrally closed in $R'$. If $R$ is integrally closed in its total quotient ring, we say simply that $R$ is integrally closed (see \cite[Definition 4.2]{Larsen}). These definitions allow us to introduce the notion of integrally closed hyperrings and hyperideals, that are equivalent to the notion of integral closedness for rings in the case in which $1 + (-1) = \{0\}$ (see \cite[Proposition 3.1]{Ramaruban}). Now, we give the following definitions:

\begin{defn}\label{2.1}
 Let $R$ be a Krasner hyperring and $I$ a hyperideal of $R$ and let $r \in R$.

{\bf(i)} We say that $r$ is integral over $I$ precisely when there are $n$ elements $a_i \in I^i$ $(1 \leq i \leq n)$ such that  $0 \in r^n + a_1r^{n-1} + a_2r^{n-2} + \cdots + a_{n-1}r + a_n$. The set $\overline{I}$ that contains all elements of $R$ which are integral over $I$ is the integral closure of $I$. The hyperideal $I$ is said to be integrally closed if $I = \overline{I}$ and for every hyperideal $J$ of $R$, if $J \subseteq \overline{I}$, then $J$ is called integral over $I$.

 {\bf(ii)} Let $Q(R)$ be the quotient hyperfield $($hyperfield of fractions$)$ of $R$ and $x \in Q(R)$. We say that $x$ is integral over $R$ precisely when there are elements $a_1, \ldots ,a_n \in R$ $(1 \leq n)$ such that $0 \in x^n + a_1x^{n-1} + a_2x^{n-2} + \cdots + a_{n-1}x + a_n$. The set $\overline{R}$ that contains all elements of $Q(R)$ which are integral over $R$ is the integral closure of $R$. The Krasner hyperring $R$ is said to be integrally closed if $R = \overline{R}$.
\end{defn}

\par Before studying the properties of the integral closedness of Krasner hyperrings and hyperideals, we give the following example.

\begin{example} \label{2.2}{\rm
Let $R = \{0, a, b, c\}$ be a Krasner hyperring with the hyperaddition $+$ and the multiplication $\cdot$ defined as follows:
\begin{center}{
\begin{tabular}{ c|c c c c}
 $+$ & $0$ & $a$ & $b$ & $c$\\
 \hline
 $0$ & $0$ & $a$ & $b$ & $c$ \\
 $a$ & $a$ & \{$0$, $b$\} & \{$a$, $c$\} & $b$ \\
 $b$ & $b$ & \{$a$, $c$\} & \{$0$, $b$\} & $a$ \\
 $c$ & $c$ & $b$          &      $a$       & $0$\\
\end{tabular}
\hspace{1cm}
\begin{tabular}{ c|c c c c}
 $\cdot$ & $0$ & $a$ & $b$ & $c$\\
 \hline
 $0$ & $0$ & $0$ & $0$ & $0$ \\
 $a$ & $0$ & $a$ & $b$ & $c$ \\
 $b$ & $0$ & $b$ & $b$ & $0$ \\
 $c$ & $0$ & $c$  &  $0$   & $c$\\
\end{tabular}}
\end{center}
It is easily seen that $A = \{0\}$, $B = \{0, b\}$, $I = \{0, c\}$ and $J = \{0, b, c\}$ are hyperideals of $R$. We note that $0 \in \overline{A}$ but $a,b,c \notin \overline{A}$. Hence $\overline{A} = \{0\}$, and so $A$ is an integrally closed hyperideal of $R$. In the hyperideal $B$,  $a \notin \overline{B}$, since $0 \notin a^n + a_1a^{n-1} + a_2a^{n-2} + \cdots + a_{n-1}a + a_n = \{a, c\}$, for every $a_i \in B^i$ $(1 \leq i \leq n)$. Also $c \notin \overline{B}$, as $0 \notin c^n + a_1c^{n-1} + a_2c^{n-2} + \cdots + a_{n-1}c + a_n = \{c\}$,  for every $a_i \in B^i$ $(1 \leq i \leq n)$. Thus $\overline{B} = \{0, b\}$ and  $B$ is an integrally closed hyperideal of $R$. One may see that $I$ is integrally closed, but $J$ is not integrally closed, since $0 \in a^2 + ba + c = \{0, b\}$.}
\end{example}

\begin{remark}\label{remark}
 Let $(R, +, \cdot)$ be a Krasner hyperring and $I,J$ hyperideals of $R$. Then

{\bf(1)} The integral closure of the hyperideal $R$ in the hyperring $R$ is always $R$, but the integral closure of the hyperring $R$ may be larger than $R$.

{\bf(2)} If $I$ is prime, then $I$ is integrally closed, since for every $r \in \overline{I}$, there are $n$ elements $a_i \in I^i$ $(1 \leq i \leq n)$ such that $0 \in r^n + a_1r^{n-1} + a_2r^{n-2} + \cdots + a_{n-1}r + a_n$, and so $r^n \in (-a_1)r^{n-1} + (-a_2)r^{n-2} + \cdots + (-a_{n-1})r + (-a_n) \subseteq I$. But $I$ is prime, then $r \in I$.

{\bf(3)} Since $0 \in x - x$ for every $x \in I$, we get $I \subseteq \overline{I}$.

{\bf(4)} Recall from {\rm \cite[Definition 4.4]{Ramaruban}} that the radical of $I$, denoted by $\sqrt{I} = \{x \in R : x^n \in I$ for some  positive integer $n\}$, is a hyperideal of $R$ containing $I$. But for every $r \in \overline{I}$, there are $n$ elements $a_i \in I^i$ $(1 \leq i \leq n)$ such that $0 \in r^n + a_1r^{n-1} + a_2r^{n-2} + \cdots + a_{n-1}r + a_n$, and so $r^n \in (-a_1)r^{n-1} + (-a_2)r^{n-2} + \cdots + (-a_{n-1})r + (-a_n) \subseteq I$. This means that $r \in \sqrt{I}$. Thus $\overline{I} \subseteq \sqrt{I}$.

{\bf(5)} If $I \subseteq J$, then the integral closure of the hyperideal $I$ is contained in the integral closure of the hyperideal $J$.

{\bf(6)} We take the $nilradical$ of $R$ as in {\rm \cite[Definition 3.14]{Ramaruban}}. Since $(R, \cdot)$ is a semigroup having zero as a bilaterally absorbing element, for every $r$ belongs to $nilradical$ of $R$ we have $0 \in r^n + 0 = \{0\}$ for some positive integer $n$ such that $r^n = 0$, and so $r \in \overline{I}$ for every hyperideal $I$.

{\bf(7)} The intersection of any non-empty family of integrally closed hyperideals of $R$ is again an integrally closed.

{\bf(8)} Suppose that $f : R \rightarrow L$ is a hyperring homomorphism. Then $f(\overline{I}) \subseteq \overline{f(I)L}$ since if $r \in \overline{I}$, then there are $n$ elements $a_i \in I^i$ $(1 \leq i \leq n)$ such that $0 \in r^n + a_1r^{n-1} + a_2r^{n-2} + \cdots + a_{n-1}r + a_n$, and so $0 = f(0) \in f(r)^n + f(a_1)f(r)^{n-1} + f(a_2)f(r)^{n-2} + \cdots + f(a_{n-1})f(r) + f(a_n)$. It follows that $f(r) \in \overline{f(I)L}$.
\end{remark}

 In \cite{Musavi}, Davvaz and Musavi investigated some properties of the polynomial hyperrings. Let $x$ be an indeterminate and $R$ a Krasner hyperring. Then $R[x]$ is called the hyperring of polynomials of $x$ over $R$, and by \cite[Theorem 3.2]{Musavi}, one may see that $(R[x], +, \cdot)$ with the two hyperoperations defined
as follows:
$$f(x) + g(x) = \{\sum_{i=0}^{max\{m,n\}}c_ix^i : c_i \in a_i + b_i\} \hspace{1cm} f(x) \cdot g(x) = \{\sum_{k=0}^{m+n}c_kx^k : c_k \in \sum_{i+j=k}a_ib_j\}$$
 is an additive-multiplication hyperring for some $f(x) = \sum_{i=0}^na_ix^i$, $g(x) = \sum_{k=0}^mb_kx^k \in R[x]$.  In the following proposition, we find a relation between  polynomial hyperrings and integral elements.
\begin{prop}\label{2.4}
Let $S$ be a Krasner hyperring and $R$ a subhyperring of $S$ and let $x \in S$. Then the following statements hold:

{\bf(1)} If $x$ is integral over $R$, then $R[x]$ is a finitely generated $R$-hypermodule.

{\bf(2)} If $P$ is a prime hyperideal of $R$ such that $R$ is Krasner hyperdomain and $S = Q(R)$ is the quotient hyperfield of $R$, then $PR[a] \neq R[a]$ or $PR[a^{-1}] \neq R[a^{-1}]$, for every unit element $a$ of $Q(R)$.
\end{prop}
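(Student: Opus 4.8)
The plan is to handle the two parts separately, in each case mimicking the classical ring-theoretic argument while justifying every rearrangement by the canonical hypergroup axioms (chiefly reversibility, axiom (5)) and by distributivity of $\cdot$ over $+$.

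\textbf{Part (1).} First I would record the integrality relation: since $x$ is integral over $R$ there are $a_1,\dots,a_n\in R$ with $0\in x^n+a_1x^{n-1}+\cdots+a_n$, and applying reversibility exactly as in Remark~\ref{remark}(2),(4) this yields $x^n\in(-a_1)x^{n-1}+\cdots+(-a_n)$. Let $M$ be the $R$-subhypermodule of $S$ generated by $\{1,x,\dots,x^{n-1}\}$. The core step is a strong induction showing $x^m\in M$ for every $m\ge 0$: the cases $0\le m\le n-1$ are immediate, and for $m\ge n$ I would multiply the displayed relation by $x^{m-n}$ and distribute to get $x^m\in(-a_1)x^{m-1}+\cdots+(-a_n)x^{m-n}$; each exponent here is strictly smaller than $m$ and nonnegative, so the induction hypothesis together with the closure of $M$ under the $R$-action and under the hypersum places $x^m$ in $M$. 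Since every element of $R[x]$ lies in a hypersum of $R$-multiples of powers of $x$, this gives $R[x]\subseteq M$, and the reverse inclusion is clear; hence $R[x]=M$ is finitely generated by $\{1,x,\dots,x^{n-1}\}$. The only point to verify with care is that distributing a single ring element across a hypersum is an equality, which holds by the distributivity axiom.

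\textbf{Part (2).} I would argue by contradiction, assuming both $PR[a]=R[a]$ and $PR[a^{-1}]=R[a^{-1}]$ (recall $a^{-1}$ exists because $Q(R)$ is a hyperfield). Since $R[a]$ and $R[a^{-1}]$ contain $1$ and $PR[a]=\{z:z\in\sum_i c_ia^i,\ c_i\in P\}$ (similarly for $a^{-1}$), this gives $P$-coefficient relations
\[ 1\in c_0+c_1a+\cdots+c_na^n,\qquad 1\in e_0+e_1a^{-1}+\cdots+e_ma^{-m}, \]
with $c_i,e_j\in P$; I would choose $n$ and $m$ minimal. As $P$ is proper, $1\notin P$, forcing $n,m\ge 1$, and without loss of generality $n\ge m$.

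The heart of the proof is a degree reduction. Multiplying the second relation by $a^m$ and distributing gives $a^m\in e_0a^m+(e_1a^{m-1}+\cdots+e_m)$; by reversibility there is a single element $w\in e_1a^{m-1}+\cdots+e_m$ and an element $v\in 1-e_0$ with $w=va^m$, using $a^m-e_0a^m=(1-e_0)a^m$. I then multiply the first relation by this specific $v\in R$ to obtain $v\in vc_0+\cdots+vc_{n-1}a^{n-1}+vc_na^n$, and observe that the top monomial telescopes: $vc_na^n=c_na^{n-m}(va^m)=c_na^{n-m}w\in c_ne_1a^{n-1}+\cdots+c_ne_ma^{n-m}$, a hypersum of monomials of degree at most $n-1$ with coefficients $c_ne_j\in P$. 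Substituting this membership back and combining like terms by distributivity shows $v\in\sum_{i=0}^{n-1}g_ia^i$ with $g_i\in P$. Finally $v\in 1-e_0$ gives $1\in v+e_0$ by reversibility, so $1\in\sum_{i=0}^{n-1}g_i'a^i$ with $g_i'\in P$, a representation of degree at most $n-1<n$, contradicting the minimality of $n$.

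The main obstacle, and the place where the classical proof does not transcribe verbatim, is precisely the step of ``multiplying the first relation by $1-e_0$'': here $1-e_0$ is a hyperset rather than an element, so one cannot multiply a membership by it directly. The resolution is to extract from the reversibility step the single element $v$ realizing $w=va^m$ and to multiply by that $v$; because the same $v$ makes the leading term $vc_na^n=c_na^{n-m}w$ collapse into the lower-degree $P$-coefficient hypersum, the argument closes without ever needing to invert $1-e_0$ (so the hyperdomain hypothesis is used only to guarantee that $Q(R)$ is a hyperfield and that $a^{-1}$ exists, and only properness of $P$ is actually invoked). Throughout, the principal bookkeeping difficulty is keeping track of which quantities are genuine elements and which are sets, so that each membership, extraction, and substitution is legitimate under the hypergroup axioms.
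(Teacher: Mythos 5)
Your proof of part (1) is essentially the paper's: both exploit the relation $x^n \in (-a_1)x^{n-1}+\cdots+(-a_n)$ to reduce degrees, the paper by iterating the reduction on an arbitrary polynomial (concluding generation by $\{1,x,\ldots,x^n\}$), you by induction on the powers of $x$ — the same idea in different packaging. Part (2), however, is a genuinely different and correct route. The paper localizes (``WLOG $R=R_P$''), then argues: if $PR[a^{-1}]=R[a^{-1}]$, an element $z\in 1-a_0$ is a unit (citing Ramaruban's Proposition 3.15, which needs the local hypothesis), whence $a\in\overline{R}$ and $R[a]\subseteq\overline{R}$; it then shows every maximal hyperideal $M$ of $R[a]$ contracts to a maximal hyperideal of $R$ (a quotient-hyperfield argument citing Harijani), which by locality must equal $P$, so $PR[a]\subseteq M\neq R[a]$. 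You instead run the classical Bourbaki/Matsumura minimal-degree argument directly: two minimal relations $1\in\sum_i c_ia^i$ and $1\in\sum_j e_ja^{-j}$ with coefficients in $P$, WLOG $n\geq m$, and elimination of the top term of the first by means of the second. Your treatment of the one hyperstructure-specific obstacle is exactly right: since $1-e_0$ is a set, you use reversibility to extract the single element $v\in 1-e_0$ with $va^m=w\in e_1a^{m-1}+\cdots+e_m$, multiply the first relation by that $v$, note $vc_na^n=c_na^{n-m}w$ falls into a $P$-combination of degree at most $n-1$, and convert $v\in 1-e_0$ back into $1\in v+e_0$ to contradict minimality of $n$. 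What each approach buys: yours is elementary and self-contained — no localization, no integral closure, no maximal hyperideals or quotient hyperrings, no external citations — and it proves slightly more, since only properness of $P$ (not primality, not locality) is used; it also sidesteps the steps the paper leaves rough (justifying the WLOG reduction to $R_P$, existence of $M$, and the passage to $R/(M\cap R)$ and its integral closure). The paper's route, by contrast, produces the by-products $a\in\overline{R}$ and the contraction property of maximal hyperideals of $R[a]$, echoing the classical lying-over circle of ideas; but since Theorem \ref{existance} invokes this proposition only as a black box, your proof would serve the rest of the paper equally well.
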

\begin{proof}
{{\bf(1)} Suppose that $x$ is integral over $R$ and $f(x) \in R[x]$ is a polynomial of degree $m$. Then there are elements $b_1, \ldots , b_n \in R$ $(1 \leq n)$ such that $0 \in x^n + b_1x^{n-1} + b_2x^{n-2} + \cdots + b_{n-1}x + b_n$ and there exist $a_0, \ldots , a_m \in R$ such that $f(x) = a_0 + a_1x + \cdots + a_mx^m$, and so $f(x) \in a_0 + a_1x + \cdots + a_mx^{m-n}((-b_1x^{n-1}) + (-b_2x^{n-2}) + \cdots + (-b_{n-1})x + (-b_n))$. Therefore $f(x) \in a_0' + a_1'x + \cdots + a_{m-1}'x^{m-1}$. By repeating this argument $m-n$ times, we have $f(x) \in a_0'' + a_1''x + \cdots + a_n''x^n$. Hence $R[x]$ is  generated by $\{1, x, \ldots , x^n\}$ as an $R$-hypermodule.

{\bf(2)} There is no loss of generality in assuming that $R = R_P$, and so $R$ is a local hyperdomain and $P$ is maximal in $R$. If $PR[a^{-1}] = R[a^{-1}]$, then $1 \in \sum_{i=0}^na_ia^{-i}$, where $a_i$'s $\in P$. It follows that $0 \in 1 - \sum_{i=0}^na_ia^{-i}$, and so $0 \in 1 - a_0 - a_1a^{-1} - \cdots - a_na^{-n}$. Note that this means that there exists $z\in 1 - a_0$ such that $0 \in z - a_1a^{-1} - \cdots - a_na^{-n}$. By \cite[Proposition 3.15]{Ramaruban}, $z$ is unit in $R$. Hence $0 \in a^{n} - z^{-1}a_1a^{n-1} - \cdots - z^{-1}a_n$, and thus $a \in \overline{R}$. Therefore $R[a] \subseteq \overline{R}$. Now, let $M$ be a maximal ideal in $R[a]$. We show that $M \cap R$ is a maximal hyperideal of $R$. By the choice of $M$, $R[a]/M$ is a hyperfield. Suppose that $x \in R/(M\cap R)$. Then $x^{-1}\in R[a]/M$, and so $x^{-1}\in \overline{R/(M\cap R)}$, since $R[a] \subseteq \overline{R}$. Therefore, there are elements $b_1, \ldots , b_n \in R/(M\cap R)$ $(1 \leq n)$ such that $0 \in x^{-n} + b_1x^{1-n} + b_2x^{2-n} + \cdots + b_{n-1}x^{-1} + b_n$. It follows that $0 \in x^{n-1}(x^{-n} + b_1x^{1-n} + b_2x^{2-n} + \cdots + b_{n-1}x^{-1} + b_n) = x^{-1} + b_1 + b_2x + \cdots + b_{n-1}x^{n-2} + b_nx^{n-1}$. Hence $x^{-1} \in -b_1 - b_2x - \cdots - b_{n-1}x^{n-2} - b_nx^{n-1} \subseteq R/(M\cap R)$. This means that $R/(M\cap R)$ is a hyperfield, and hence $M \cap R$ is a maximal hyperideal of $R$, by \cite[Proposition 3.7]{Harijani}. Since $R$ is local, $M \cap R = P$, and thus $PR[a] \neq R[a]$.}
\end{proof}
\par In the sequel, we generalize some classical results in integrally closed ring theory. In fact, we use hyperideals and hypermodules to express integral closure. First, we give the following lemma.

\begin{lem}\label{2.9}
Let $(R, +, \cdot)$ be a Krasner hyperring, $I$ a hyperideal of $R$ and let $r \in R$. Then $r \in \overline{I}$ if and only if $(I + (r))^n = I(I + (r))^{n-1}$ for some integer $n$.
\end{lem}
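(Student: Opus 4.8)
The plan is to write $J = I + (r)$ for the hyperideal generated by $I$ and $r$, and to reduce the stated biconditional to a single membership. The device is the unconditional identity
\[
J^n = I J^{n-1} + (r^n), \qquad n \ge 1,
\]
which I would establish first. Granting it, the equality $J^n = I J^{n-1}$ holds exactly when $(r^n) \subseteq I J^{n-1}$, and since $I J^{n-1}$ is itself a hyperideal this is equivalent to the single membership $r^n \in I J^{n-1}$. So the whole lemma reduces to the claim that $r \in \overline{I}$ if and only if $r^n \in I J^{n-1}$ for some $n$.

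The first half of that claim is a translation of the defining integral-dependence relation, and here the canonical hypergroup structure carries the argument. If $r \in \overline{I}$, there are $a_i \in I^i$ with $0 \in r^n + a_1 r^{n-1} + \cdots + a_n$; using reversibility together with the uniqueness of inverses and the fact that negation distributes over hypersums in $(R,+)$ (and $-a_i \in I^i$), I would rewrite this as $r^n \in (-a_1) r^{n-1} + \cdots + (-a_n)$, that is, $r^n \in \sum_{i=1}^n I^i r^{n-i}$. The same manipulation run backwards shows that any membership $r^n \in \sum_{i=1}^n I^i r^{n-i}$ produces a relation $0 \in r^n + a_1 r^{n-1} + \cdots + a_n$ with $a_i \in I^i$. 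Hence $r \in \overline{I}$ is precisely the assertion that $r^n \in \sum_{i=1}^n I^i r^{n-i}$ for some $n$.

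What remains is to identify $\sum_{i=1}^n I^i r^{n-i}$ with $I J^{n-1}$, and this is where I expect the genuine obstacle to lie. One cannot expand $J^n = (I + (r))^n$ by naively multiplying out hypersums, because in a Krasner hyperring only the inclusion $(x+y)(z+w) \subseteq xz + xw + yz + yw$ is guaranteed, not equality. The way around this is to use distributivity only over a \emph{single} element, where equality does hold, in order to prove the exact law $(I + (r)) K = I K + (r) K$ for a hyperideal $K$; iterating it gives the clean expansion $J^m = \sum_{k=0}^m I^{m-k} (r)^k$ with no loss, and in particular $I J^{n-1} = \sum_{k=0}^{n-1} I^{n-k} (r)^k$.

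Finally, for $0 \le k \le n-1$ the factor $I^{n-k}$ is a genuine power of $I$ and therefore absorbs multiplication by $R$, so that $I^{n-k}(r)^k = I^{n-k} r^k$ as sets; reindexing by $i = n-k$ then gives $I J^{n-1} = \sum_{i=1}^n I^i r^{n-i}$, which completes the reduction. The same expansion with $m = n$ isolates the top term $(r)^n = (r^n)$ and places every lower term inside $I J^{n-1}$, thereby yielding the unconditional identity $J^n = I J^{n-1} + (r^n)$ promised at the start. Combining the two halves closes the proof.
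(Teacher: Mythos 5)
Your proposal is correct and follows essentially the same route as the paper: both directions hinge on using reversibility in the canonical hypergroup to convert the integral-dependence relation $0 \in r^n + a_1r^{n-1} + \cdots + a_n$ into the membership $r^n \in I(I+(r))^{n-1}$ and back again. The only difference is one of rigor, not of strategy: your unconditional identity $J^n = IJ^{n-1} + (r^n)$ and the explicit expansion $IJ^{n-1} = \sum_{i=1}^n I^i r^{n-i}$ (obtained from the exact single-element distributive law, sidestepping the failure of equality for products of hypersums) justify the inclusions $(I+(r))^n \subseteq I(I+(r))^{n-1}$ and the extraction of the coefficients $c_i \in I^i$, both of which the paper asserts without proof.
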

\begin{proof}
{$\Rightarrow$) Suppose that $r \in \overline{I}$. Then there are $n$ elements $a_i \in I^i$ $(1 \leq i \leq n)$ such that $0 \in r^n + a_1r^{n-1} + a_2r^{n-2} + \cdots + a_{n-1}r + a_n$, and so $r^n \in (-a_1)r^{n-1} + (-a_2)r^{n-2} + \cdots + (-a_{n-1})r + (-a_n) \subseteq I(I+(r))^{n-1}$. This means that $(I+(r))^n \subseteq I(I + (r))^{n-1}$, and hence $(I+(r))^n = I(I + (r))^{n-1}$.

$\Leftarrow$) Suppose that $(I+(r))^n = I(I + (r))^{n-1}$, for some positive integer $n$. Then $r^n \in I(I+(r))^{n-1}$, and so there are $n$ elements $c_1, \ldots , c_n \in I$ such that $c_i \in I^i$ for every $i$ and $r^n \in c_1r^{n-1} + c_2r^{n-2} + \cdots + c_{n-1}r + c_n$. Hence $r \in \overline{I}$.
}
\end{proof}

\begin{thm} \label{2.10}
Let $(R, +, \cdot)$ be a Krasner hyperring, $I$ a hyperideal of $R$ and let $r \in R$. If $r \in \overline{I}$, then there exists a finitely generated $R$-hypermodule $M$ such that $rM \subseteq IM$ and $0 \in r^kx$ $($for some $k \in \mathbb{N})$ for every $x \subseteq R$ such that $xM = 0$, where $x$ is a set of elements of $R$. In fact, the converse is true whenever the set $\{y : y \in \sum_{i=1}^ny_i - y_i$ for some $y_i \in I\} = 0$.

\end{thm}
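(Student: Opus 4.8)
The plan is to treat the two implications separately, using Lemma~\ref{2.9} for the forward direction and a hyperring version of the determinant (adjugate) trick for the converse.

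For the forward implication, suppose $r \in \overline{I}$. By Lemma~\ref{2.9} there is an integer $n$ with $(I+(r))^n = I(I+(r))^{n-1}$, and I would take $M = (I+(r))^{n-1}$, viewed as an $R$-hypermodule. If $I$ is not finitely generated, I would first replace it by the finitely generated subhyperideal $I_0 \subseteq I$ generated by the finitely many elements of $I$ occurring in a fixed integral equation for $r$; then $r \in \overline{I_0}$, the module $M = (I_0+(r))^{n-1}$ is finitely generated, and $I_0 M \subseteq IM$. Since $r \in I+(r)$, one gets $rM \subseteq (I+(r))^n = I(I+(r))^{n-1} = IM$, which is the first required property. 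For the second, note that $r^{n-1} \in M$; hence if $x \subseteq R$ satisfies $xM = 0$, then in particular $\xi r^{n-1} = 0$ for every $\xi \in x$, so $0 \in r^{n-1}x$ and it suffices to take $k = n-1$.

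For the converse, assume $M$ is generated by $m_1, \dots, m_s$ and $rM \subseteq IM$. For each $j$ the element $rm_j$ lies in $IM = \sum_i Im_i$, so there are $c_{ji} \in I$ with $rm_j \in \sum_i c_{ji}m_i$, equivalently $0 \in rm_j + \sum_i (-c_{ji})m_i$. Collecting these coefficients into the matrix $r\mathbf{1}_s - C$ with $C = (c_{ji})$ over $R$, I would multiply the relations by the entries of the adjugate (classical adjoint) of $r\mathbf{1}_s - C$ and sum, so as to reproduce the identity $\sum_j \mathrm{adj}(r\mathbf{1}_s - C)_{lj}(r\mathbf{1}_s - C)_{ji} = \det(r\mathbf{1}_s - C)\,\delta_{li}$. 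Any $d \in \det(r\mathbf{1}_s - C)$ then satisfies $d \in r^s + a_1 r^{s-1} + \cdots + a_s$ with $a_i \in I^i$, since the coefficient of $r^{s-i}$ in that expansion is a hypersum of products of $i$ entries of $C$, each of which lies in $I^i$.

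The delicate point, and the step I expect to be the main obstacle, is to pass from the relations $0 \in \sum_i (r\mathbf{1}_s - C)_{ji}m_i$ to $\det(r\mathbf{1}_s - C)\,m_l = 0$. In a ring this is immediate from $\mathrm{adj}(A)A = \det(A)\mathbf{1}$, but over a Krasner hyperring the off-diagonal cancellations in that identity only produce hypersums of the form $\sum (y_i - y_i)$ with the $y_i$ built from the $c_{ji} \in I$; the hypothesis that $\{y : y \in \sum_{i}(y_i - y_i)\ \text{for some}\ y_i \in I\} = 0$ is exactly what forces these to collapse to $0$, yielding $d\,m_l = 0$ for a suitable $d$ and all $l$, hence $\det(r\mathbf{1}_s - C)\,M = 0$. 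Taking $x = \det(r\mathbf{1}_s - C)$ gives $xM = 0$, so by hypothesis $0 \in r^k x$ for some $k$; choosing $d \in x$ with $r^k d = 0$ and using $d \in r^s + a_1 r^{s-1} + \cdots + a_s$, distributivity of the multiplication yields $0 = r^k d \in r^{s+k} + a_1 r^{s+k-1} + \cdots + a_s r^k$, an integral equation of degree $s+k$ for $r$ over $I$. Therefore $r \in \overline{I}$.
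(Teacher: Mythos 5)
Your proof is correct and takes essentially the same route as the paper's: for the forward direction you pass to the finitely generated subhyperideal generated by the coefficients of the integral equation, apply Lemma \ref{2.9} to it, and take $M = (I_0+(r))^{n-1}$, and for the converse you use the adjugate/determinant trick, with the hypothesis $\{y : y \in \sum_{i}(y_i - y_i) \text{ for some } y_i \in I\} = 0$ supplying exactly the cancellations needed to conclude $\det(r\mathbf{1}_s - C)\,M = 0$. The only difference is cosmetic: you spell out the final step (choosing $d \in \det(r\mathbf{1}_s - C)$ with $r^k d = 0$ and multiplying its expansion $d \in r^s + a_1r^{s-1} + \cdots + a_s$, $a_i \in I^i$, by $r^k$ to obtain the integral equation), which the paper leaves implicit.
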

\begin{proof}
{Suppose that $r \in \overline{I}$. Then there are $n$ elements $a_i \in I^i$ $(1 \leq i \leq n)$ such that $0 \in r^n + a_1r^{n-1} + a_2r^{n-2} + \cdots + a_{n-1}r + a_n$. Let $J$ be the hyperideal generated by $\{a_1, a_2, \ldots , a_n\}$. Then $I \subseteq J$ and $r \in \overline{J}$. By Lemma \ref{2.9}, $(J + (r))^n = J(J + (r))^{n-1}$, for some integer $n$. Set $M =: (J + (r))^{n-1}$. Then $M$ is a finitely generated $R$-hypermodule and $rM = r(J + (r))^{n-1} \subseteq JM \subseteq IM$. Also, suppose that $x \in R$ such that $xM = 0$. Then $xr^{n-1} \in xM = 0$, and hence $0 \in r^{n-1}x $. Conversely, suppose that there exists a finitely generated $R$-hypermodule $M$ such that $rM \subseteq IM$ and $0 \in r^kx $ $(k \in \mathbb{N})$, for every $x \in R$ such that $xM = 0$. Then there exists a finite subset $\{m_1, m_2, \ldots , m_n\}$ of $M$ such that $M = \{z$ : there exist $r_1, \ldots , r_n \in R$ such that $z \in \sum_{i=1}^nr_im_i\}$ and $rM \subseteq IM$. Hence $rm_i \in \sum_{j=1}^na_{ij}m_j$, for every $i \in \{1, \ldots , n\}$ and some $a_{ij} \in I$. Let $A = (\delta_{ij}r - a_{ij})$ be the matrix in which $\delta_{ij} = 1$ whenever $i=j$, or else $\delta_{ij} = 0$ and suppose that $m$ is the vector $(m_1, \ldots , m_n)^T$. It is easy to see that $rm_i - a_{i1}m_1 - a_{i2}m_2 - \cdots - a_{in}m_n \subseteq a_{i1}m_1 - a_{i1}m_1 + a_{i2}m_2 - a_{i2}m_2 + \cdots + a_{in}m_n - a_{in}m_n = 0$ for every $i \in \{1, \ldots , n\}$. It follows that $0 = adj(A)Am = det(A)m$, and so $det(A)m_i = 0$ for every $i \in \{1, \ldots , n\}$. Therefore, $0 = det(A)M$, and hence by the hypothesis $0 \in r^kdet(A)$ for $k \in \mathbb{N}$. Thus $r \in \overline{I}$.
}
\end{proof}
\par We close this section with the following corollary.
\begin{cor}\label{2.7}
Let $I$ be a hyperideal of the Krasner hyperring $R$ such that $\{y : y \in \sum_{i=1}^ny_i - y_i$ for some $y_i \in I\} = 0$. Then $\overline{I}$ is a hyperideal of $R$.
\end{cor}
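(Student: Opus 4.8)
The plan is to verify directly that $\overline I$ satisfies the two hyperideal axioms of Definition 2.3: absorption ($a\in\overline I,\ r\in R\Rightarrow ra\in\overline I$) and closure under subtraction ($a,b\in\overline I\Rightarrow a-b\subseteq\overline I$); nonemptiness is immediate since $I\subseteq\overline I$ by Remark \ref{remark}(3). Absorption is elementary and needs neither Theorem \ref{2.10} nor the hypothesis: if $0\in a^n+p_1a^{n-1}+\cdots+p_n$ with $p_i\in I^i$, then multiplying this relation by $r^n$ and using distributivity gives $0\in (ra)^n+(rp_1)(ra)^{n-1}+\cdots+(r^np_n)$, whose coefficients $r^ip_i$ lie in $I^i$; hence $ra\in\overline I$. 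Taking $r=-1$ shows $-a\in\overline I$, so it remains only to prove closure under the hyperaddition, i.e. that $c\in\overline I$ for every $c\in a+b$ with $a,b\in\overline I$; combined with negation this yields $a-b=a+(-b)\subseteq\overline I$.

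For the addition step I would first pass to the finitely generated sub-hyperideal $I_0\subseteq I$ generated by the finitely many elements of $I$ occurring in the integral relations of $a$ and $b$. Then $a,b\in\overline{I_0}$, the reversibility hypothesis $\{y:y\in\sum y_i-y_i\}=0$ still holds for $I_0\subseteq I$, and by Remark \ref{remark}(5) it suffices to prove $c\in\overline{I_0}$. Put $K=I_0+(a)+(b)$, a finitely generated hyperideal. Applying Lemma \ref{2.9} to $a$ and to $b$ produces integers $s,t$ with $(a)^s\subseteq I_0K^{s-1}$ and $(b)^t\subseteq I_0K^{t-1}$. Expanding $K^N$ for $N=s+t-1$ as a hyperaddition of monomials $I_0^e(a)^i(b)^j$ with $e+i+j=N$: any monomial with $e\ge1$ visibly lies in $I_0K^{N-1}$, while a monomial with $e=0$ satisfies $i\ge s$ or $j\ge t$ (as $i+j=N>s+t-2$), and rewriting the offending factor via $(a)^s\subseteq I_0K^{s-1}$ or $(b)^t\subseteq I_0K^{t-1}$ again lands it in $I_0K^{N-1}$. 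This gives the reduction identity $K^N=I_0K^{N-1}$. Setting $M=K^{N-1}$, a finitely generated $R$-hypermodule, and using $c\in a+b\subseteq K$, I obtain $cM\subseteq K^N=I_0M$.

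The step I expect to be the main obstacle is the remaining annihilator condition of Theorem \ref{2.10}, namely $0\in c^kx$ for every $x\in R$ with $xM=0$. The naive choice of module, the product $M_aM_b$ of the modules that Theorem \ref{2.10} supplies for $a$ and $b$, is too weak: it only forces $x$ to kill a single balanced product $a^{p}b^{q}$, which fails to annihilate the unbalanced monomials $a^ib^j$ appearing in $c^k=(a+b)^k$. The entire construction of $M=K^{N-1}$ is designed to overcome exactly this: since $K^{N-1}$ contains $a^{N-1}$ and $b^{N-1}$ \emph{separately}, any $x$ with $xM=0$ satisfies $xa^{N-1}=0$ and $xb^{N-1}=0$; choosing $k=2N-2$, every monomial $a^ib^j$ with $i+j=k$ has $i\ge N-1$ or $j\ge N-1$, so $a^ib^jx=0$ termwise, whence $c^kx\in\sum_{i+j=k}a^ib^jx=\{0\}$ and $0\in c^kx$. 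With $cM\subseteq I_0M$ and this annihilator condition established, Theorem \ref{2.10} (whose converse applies because the hypothesis passes to $I_0$) yields $c\in\overline{I_0}\subseteq\overline I$. Together with the absorption and negation steps this gives $a-b\subseteq\overline I$ and $ra\in\overline I$, so $\overline I$ is a hyperideal of $R$.
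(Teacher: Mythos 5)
Your proposal is correct and follows essentially the same route as the paper: pass to the finitely generated subhyperideal generated by the coefficients of the two integral relations, use Lemma \ref{2.9} to get a reduction identity $K^{N}=I_0K^{N-1}$ for $K=I_0+(a)+(b)$ (the paper telescopes through the nested ideals $J=I'+(x)$ and $K=J+(y)$ rather than expanding monomials), set $M=K^{N-1}$, and invoke the converse of Theorem \ref{2.10}. The only stylistic difference is your annihilator verification: since $c\in a+b\subseteq K$ gives $c^{N-1}\in K^{N-1}=M$ directly, the detour through $a^{N-1}$, $b^{N-1}$ and $k=2N-2$ is unnecessary, and this shortcut is exactly what the paper uses.
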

\begin{proof}
{Let $I$ be a hyperideal of the Krasner hyperring $R$. It is easy to see that $\alpha r \in \overline{I}$, for every $\alpha \in R$ and $r \in \overline{I}$. It is sufficient to show that $x+y \subseteq \overline{I}$ for every $x,y \in \overline{I}$. Let $x,y \in \overline{I}$. Then there are elements $a_i \in I^i$ $(1 \leq i \leq n)$ and $b_j \in I^j$ $(1 \leq j \leq m)$ such that $0 \in x^n + a_1x^{n-1} + a_2x^{n-2} + \cdots + a_{n-1}x + a_n$ and $0 \in y^m + b_1y^{m-1} + b_2y^{m-2} + \cdots + b_{m-1}y + b_m$. Let $I'$ be the hyperideal generated by $\{a_1, \ldots , a_n, b_1, \ldots , b_m\}$. Then $x,y \in \overline{I'}$. Let $J = I' + (x)$ and $K = I' + (x,y) = J + (y)$. Since $x,y \in \overline{I'}$, it follows from Lemma \ref{2.9} that $J^{n+1} = (I' + (x))^{n+1} = I'(I' + (x))^n = I'J^n$ for some integer $n$ and $K^{m+1} = (J + (y))^{m+1} = J(J + (y))^m = JK^m$ for some integer $m$. Hence $K^{m+n+1} = JK^{m+n} = \cdots = J^{n+1}K^m = I'J^{n}K^m \subseteq I'K^{m+n} \subseteq K^{m+n+1}$. Thus $I'K^{m+n} = K^{m+n+1}$, and hence $I'(I' + (x,y))^{m+n} = (I' + (x,y))^{m+n+1}$. As $x+y \subseteq K$, $(x+y)K^{m+n} \subseteq K^{m+n+1} = I'K^{m+n}$. If $aK^{m+n} = 0$ for some $a\in R$, then $a(x+y)^{m+n} = 0$, and so $x+y \subseteq \sqrt{(0:a)}$. Hence, by Theorem \ref{2.10}, $x+y \subseteq \overline{I'}$, and thus $\overline{I}$ is hyperideal.}
\end{proof}

\begin{center}{\section{ Hypervaluation Hyperideals and Primary Hyperideals}}
\end{center}
\par In this section some important results in valuation ring theory are generalized. We define the notion of hypervaluation hyperideals and then a relation between hypervaluations and primary hyperideals is studied.

Let $Q(R)$ be the quotient hyperfield of the Krasner hyperring $R$. Recall from \cite[Definition 4.3]{Harijani} that $R$ is called hypervaluation hyperring if for any $a \in Q(R)$ we have $a \in R$ or $a^{-1} \in R$, and we can easily see that if $R$ is a hypervaluation hyperring, then the set of all hyperideals of $R$ is totally ordered by inclusion. The concept of hypervaluation on a hyperfield was introduced in \cite{Harijani}. A hypervaluation on $Q(R)$ is a surjective map $\nu : Q(R) \rightarrow G_{\infty}$, where $G$ is a totally ordered Abelian group with an element $\infty$, such that

(a) $\nu(x) = \infty$ if and only if $x = 0$;

(b) $\nu(-x) = \nu(x)$;

(c) $\nu(x\cdot y) = \nu(x) \cdot \nu(y)$;

(d) If $z \in x + y$, then $\nu(z) \geq min\{\nu(x), \nu(y)\}$.

For more details we refer the reader to \cite{Davvaz,Salasi,Ramaruban}. The non-negative hypervaluations on $Q(R)$ give us a special class of hyperideals. In the following these hyperideals (hypervaluation hyperideals) are introduced.

\begin{defn}
Let $I$ be a hyperideal of the Krasner hyperdomain $R$ such that $Q(R)$ is the quotient hyperfield of $R$. Then $I$ is said to be
hypervaluation hyperideal if there is a hypervaluation hyperring $V$ of $Q(R)$ containing $R$ and a hyperideal $J$ of $V$ such that $I = J \cap R$. We say that $I$ is a $\nu$-hyperideal whenever $\nu$ is the hypervaluation determined by $V$.
\end{defn}

\begin{prop}\label{c}
Let $I$ be a hyperideal of the Krasner hyperdomain $R$ and let $\nu: Q(R) \rightarrow G_{\infty}$ be a hypervaluation on $Q(R)$ and nonnegative on $R$. Then the following statements are equivalent:

{\bf(a)} $I$ is a $\nu$-hyperideal.

{\bf(b)} For $x,y \in R$, if $x \in I$ and $\nu(y) \geq \nu(x)$, then $y \in I$.

{\bf(c)} If $V$ is the hypervaluation hyperring of $\nu$, then $IV\cap R = I$.
\end{prop}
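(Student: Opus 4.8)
The plan is to establish the cycle of implications (a) $\Rightarrow$ (b) $\Rightarrow$ (c) $\Rightarrow$ (a). Throughout I would fix the convention that $V$ denotes the hypervaluation hyperring attached to $\nu$, namely $V = \{a \in Q(R) : \nu(a) \geq e\}$ where $e$ is the identity of $G$; the hypothesis that $\nu$ is nonnegative on $R$ guarantees $R \subseteq V$, and since all three statements refer to this same $V$, the cycle closes cleanly. For (a) $\Rightarrow$ (b), assume $I = J \cap R$ for a hyperideal $J$ of $V$ and take $x \in I$, $y \in R$ with $\nu(y) \geq \nu(x)$. If $x = 0$ then $\nu(y) \geq \nu(0) = \infty$ forces $y = 0 \in I$; otherwise I would form $t = y x^{-1} \in Q(R)$ and observe, using axiom (c) and the translation invariance of the order on $G$, that $\nu(t) = \nu(y)\nu(x)^{-1} \geq e$, so $t \in V$. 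Since $x \in I \subseteq J$ and $J$ is a hyperideal of $V$, the single element $y = t\cdot x$ lies in $J$; as $y \in R$ as well, we conclude $y \in J \cap R = I$.

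For (b) $\Rightarrow$ (c), the inclusion $I \subseteq IV \cap R$ is immediate because $1 \in V$ gives $I \subseteq IV$, while $I \subseteq R$ by hypothesis. The substance is the reverse inclusion. Take $t \in IV \cap R$, so $t \in R$ and $t$ lies in a finite hyper-sum $\sum_{i=1}^{k} a_i v_i$ with $a_i \in I$ and $v_i \in V$ (this being the description of the product hyperideal $IV$). Discarding any zero terms, I would first extend axiom (d) by induction through the nested hyperoperations to obtain the ultrametric estimate $\nu(t) \geq \min_{i} \nu(a_i v_i)$. Since each $\nu(v_i) \geq e$, translation invariance yields $\nu(a_i v_i) = \nu(a_i)\nu(v_i) \geq \nu(a_i)$, whence $\nu(t) \geq \min_i \nu(a_i) = \nu(a)$ for the generator $a \in I$ of least valuation. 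Applying (b) with $x = a \in I$ and $y = t \in R$ then forces $t \in I$, giving $IV \cap R \subseteq I$.

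For (c) $\Rightarrow$ (a), I would simply set $J = IV$. The product $IV$ is a hyperideal of $V$ (it is closed under the induced subtraction and absorbs multiplication by $V$, since $w\cdot(a_i v_i) = a_i(w v_i)$ with $w v_i \in V$), and (c) gives $J \cap R = IV \cap R = I$ with $\nu$ the hypervaluation determined by $V$, so $I$ is a $\nu$-hyperideal by definition.

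The main obstacle I anticipate is the summation step inside (b) $\Rightarrow$ (c): one must pin down the correct description of $IV$ as the collection of elements belonging to finite hyper-sums $\sum a_i v_i$, and then justify $\nu(t) \geq \min_i \nu(a_i v_i)$ for every $t$ in such a sum by carefully iterating (d) across the hyperoperation rather than treating the sum as a single element, while ensuring that the valuation-minimizing generator is a genuine element of $I$ (so that (b) is applicable). By contrast, implications (a) $\Rightarrow$ (b) and (c) $\Rightarrow$ (a) are essentially formal once the identification of $V$ with $\{a : \nu(a) \geq e\}$ is recorded.
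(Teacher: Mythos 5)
Your proposal is correct and follows essentially the same route as the paper: the division trick $y = (y/x)\cdot x$ with $y/x \in V$ for (a)~$\Rightarrow$~(b), the iterated ultrametric estimate $\nu(t) \geq \min_i \nu(a_i b_i) \geq \min_i \nu(a_i)$ on elements of $IV$ for (b)~$\Rightarrow$~(c), and taking $J = IV$ for (c)~$\Rightarrow$~(a), which the paper dismisses as ``straightforward'' (and mislabels as a second (b)~$\Rightarrow$~(c)). Your write-up is in fact more careful than the paper's, since you handle the case $x = 0$, record explicitly that $V = \{a \in Q(R) : \nu(a) \geq e\} \supseteq R$, and verify that $IV$ is a hyperideal of $V$.
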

\begin{proof}
{{\bf(a)} $\Rightarrow$ {\bf(b)} Let $I$ be a $\nu$-hyperideal and $x,y \in R$ be such that $x \in I$ and $\nu(y) \geq \nu(x)$. Then there is a hypervaluation hyperring $V$ of $Q(R)$ containing $R$ and a hyperideal $J$ of $V$ such that $I = J \cap R$, and so $y = \frac{y}{x}\cdot x \in Vx\cap R \subseteq J\cap R = I$.

{\bf(b)} $\Rightarrow$ {\bf(c)} Let $y \in VI\cap R$. Then $y \in \sum_{i=1}^na_ib_i$, where $a_i$'s $\in I$ and $b_i$'s $\in V$. Suppose that  $\nu(a_i) = min\{\nu(a_1), \ldots , \nu(a_n)\}$. Then $\nu(y) \geq min\{\nu(a_1)\nu(b_1), \ldots , \nu(a_n)\nu(b_n)\} \geq min\{\nu(a_1), \ldots , \nu(a_n)\} = \nu(a_i)$. Hence $y \in I$ and thus $IV\cap R = I$.

{\bf(b)} $\Rightarrow$ {\bf(c)} is straightforward.
}
\end{proof}
\begin{example}\label{example}{\rm

 Let $I$ be a $\nu$-hyperideal of the Krasner hyperdomain $R$. Then $\sqrt{I}$ is a $\nu$-hyperideal of $R$. Further, $\sqrt{I}$ is a prime hyperideal of $R$. In fact, let $x,y \in R$ be such that $x\in \sqrt{I}$ and $\nu(y) \geq \nu(x)$. Then there exists a positive integer $n$ such that $x^n \in I$. Hence $\nu(y^n) \geq \nu(x^n)$. Since $I$ is a $\nu$-hyperideal, we get $y^n \in I$. Thus $\sqrt{I}$ is a $\nu$-hyperideal. Furthermore, if $ab \in \sqrt{I}$ for some $a,b \in R$, then $a^nb^n \in I$ for some positive integer $n$. Assume that $\nu(a) \geq \nu(b)$, then $\nu(a^{2n}) \geq \nu(a^{n}b^{n})$, which means that $\sqrt{I}$ is a prime hyperideal of $R$.

}
\end{example}
\par We know that every valuation ring is integrally closed \cite[Proposition 5.18]{Atiyah}. In the following we show that this property is also true in hypervaluation Krasner hyperrings.

\begin{prop}\label{def}
Every hypervaluation Krasner hyperring is integrally closed.
\end{prop}
\begin{proof}
{Let $R$ be a hypervaluation Krasner hyperring with the quotient hyperfield $Q(R)$ and let $r \in \overline{R}$. Then there are elements $a_1, \ldots , a_n \in R$ $(1 \leq n)$ such that $0 \in r^n + a_1r^{n-1} + a_2r^{n-2} + \cdots + a_{n-1}r + a_n$. If $r \in R$, then we are done. Suppose that $r \notin R$. Then $r^{-1} \in R$, and so $r^{-s} \in R$ for every $s \in \mathbb{N}$. It follows that
$$0 \in r + a_1 + a_2r^{-1} + \cdots + a_{n-1}r^{2-n} + a_nr^{1-n}.$$
Hence $r \in -(a_1 + a_2r^{-1} + \cdots + a_{n-1}r^{2-n} + a_nr^{1-n}) \subseteq R$, a contradiction. Thus $R$ is an integrally closed Krasner hyperring.
}
\end{proof}
\par In the following relation between hypervaluation Krasner hyperrings and integral closure of hyperideals is given. In fact, we show that the integral closure of a hyperideal is determined by the hypervaluation Krasner hyperrings. First, we show that for every Krasner hyperdomain $R$, there exists a hypervaluation hyperdomain between $R$ and $Q(R)$.
\begin{thm}\label{existance}
Let $R$ be a Krasner hyperdomain with the hyperfield of fractions $Q(R)$. Then for every prime hyperideal $P$ of $R$ there exists a hypervaluation hyperdomain $V$ of $Q(R)$ containing $R$ such that $M\cap R = P$, where $M$ is a maximal hyperideal in $V$.
\end{thm}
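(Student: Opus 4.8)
The plan is to mimic the classical construction of a valuation ring dominating a prescribed local ring, carried out via Zorn's lemma, with Proposition \ref{2.4}(2) supplying the decisive step. First I would localize: replacing $R$ by $R_P$, I may assume that $R$ is a local hyperdomain with unique maximal hyperideal $P$ and that $Q(R_P) = Q(R)$, since a hypervaluation hyperdomain $V$ of $Q(R)$ with $V \supseteq R_P$ and maximal hyperideal $M$ satisfying $M \cap R_P = PR_P$ contracts back to one with $M \cap R = P$. Thus the goal becomes: given a local hyperdomain $(R, P)$, find a hypervaluation hyperdomain $V$ of $Q(R)$ dominating $(R, P)$, that is, with $R \subseteq V \subseteq Q(R)$ and $M \cap R = P$ for its maximal hyperideal $M$.

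Next I would set up the domination order. Let $\Sigma$ be the collection of pairs $(A, \mathfrak{m})$ where $A$ is a local subhyperring of $Q(R)$ containing $R$ and $\mathfrak{m}$ is its maximal hyperideal with $\mathfrak{m} \cap R = P$, ordered by $(A, \mathfrak{m}) \leq (B, \mathfrak{n})$ if and only if $A \subseteq B$ and $\mathfrak{n} \cap A = \mathfrak{m}$. The set $\Sigma$ is nonempty, as it contains $(R, P)$. For a chain $\{(A_\lambda, \mathfrak{m}_\lambda)\}$, the union $A = \bigcup_\lambda A_\lambda$ is a subhyperring of $Q(R)$ and $\mathfrak{m} = \bigcup_\lambda \mathfrak{m}_\lambda$ is a hyperideal of $A$; one checks that $(A, \mathfrak{m})$ is again local with maximal hyperideal $\mathfrak{m}$ (an element outside $\mathfrak{m}$ lies outside some $\mathfrak{m}_\lambda$, hence is a unit in that $A_\lambda$ and therefore in $A$) and that $\mathfrak{m} \cap R = P$, so $(A, \mathfrak{m})$ is an upper bound in $\Sigma$. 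By Zorn's lemma, $\Sigma$ has a maximal element $(V, M)$.

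It remains to show that $V$ is a hypervaluation hyperring. Let $x$ be any unit of $Q(R)$. Applying Proposition \ref{2.4}(2) to the local hyperdomain $V$ with its maximal hyperideal $M$, either $MV[x] \neq V[x]$ or $MV[x^{-1}] \neq V[x^{-1}]$; by symmetry assume the former. Then $MV[x]$ is a proper hyperideal of $V[x]$, so it is contained in some maximal hyperideal $\mathfrak{n}$ of $V[x]$. Since $\mathfrak{n} \cap V$ is a proper hyperideal of $V$ (as $1 \notin \mathfrak{n}$) containing $M$, and $M$ is maximal, we get $\mathfrak{n} \cap V = M$; localizing $V[x]$ at $\mathfrak{n}$ then produces an element of $\Sigma$ dominating $(V, M)$. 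Maximality of $(V, M)$ forces this ring to equal $V$, whence $x \in V$. Thus for every $x \in Q(R)$ we have $x \in V$ or $x^{-1} \in V$, so $V$ is a hypervaluation hyperdomain, and by construction $M \cap R = P$.

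The main obstacle I anticipate lies in the Zorn's lemma step transferred to the hyperring setting: one must verify that the union of a chain of local subhyperrings, equipped with the union of their maximal hyperideals, is genuinely a \emph{local} hyperring (that its non-units form a hyperideal, using closure of $\mathfrak{m}$ under the hyperaddition $x - y$) and that the domination relation is preserved in the limit. These facts are routine classically, but in the canonical-hypergroup framework the characterization of units, via \cite[Proposition 3.15]{Ramaruban} already invoked in Proposition \ref{2.4}(2), must be checked with care. Once local-ness passes to unions, the remainder transcribes the commutative-algebra argument verbatim, with Proposition \ref{2.4}(2) playing the role of the standard polynomial-extension lemma.
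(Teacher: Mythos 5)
Your proposal is correct and follows essentially the same route as the paper: localize at $P$, apply Zorn's lemma to the poset of local hyperdomains between $R_P$ and $Q(R)$ dominating $(R_P, PR_P)$, and then use Proposition \ref{2.4}(2), a maximal hyperideal of $V[a]$ containing $MV[a]$, and localization at it to conclude that the Zorn-maximal element is a hypervaluation hyperdomain. Your domination order ($\mathfrak{n} \cap A = \mathfrak{m}$) is equivalent to the paper's condition $M_S K \subseteq M_K$ since the hyperideals involved are maximal, so the two arguments coincide step for step.
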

\begin{proof}
{With no loss of generality  assume that $R = R_P$, and so $R$ is local hyperdomain and $P$ is maximal in $R$. Let
\begin{center}
$\Theta = \{(S,M_S) : R \subseteq S \subseteq Q(R)$ and $PS \subseteq M_S\},$
\end{center}
where each $(S,M_S)$ is a local hyperdomain with the maximal hyperideal $M_S$. Then $\Theta$ is not empty, as $(R,P)$. One may see that $\Theta$ is partially ordered by $\leq$, where $(S,M_S) \leq (K,M_K)$ if $S \subseteq K$ and $M_SK \subseteq M_K$, and  every non-empty totally ordered subset of $\Theta$ has an upper bound. Then $\Theta$ has at least one maximal element $(V,M_V)$, by Zorn's Lemma. We show that $V$ is a hypervaluation hyperdomain. Let $a$ be an element of $Q(R)$. Then by Proposition \ref{2.4} (2), either $M_VV[a] \neq V[a]$ or $M_VV[a^{-1}] \neq V[a^{-1}]$. Assume that $M_VV[a] \neq V[a]$. Then by \cite[Corollary 3.9]{Harijani}, there exists a maximal hyperideal $M_{V[a]}$ of $V[a]$ containing $M_VV[a]$. Therefore, $(V[a]_{M_{V[a]}},M_{V[a]}V[a]_{M_{V[a]}})$ is an element of $\Theta$ which contains $V$, and so $V[a]_{M_{V[a]}} = V$. It follows that $a\in V[a]_{M_{V[a]}} = V$. Thus, $V$ is a hypervaluation hyperdomain, as $P$ is maximal in $R$ and $(V,M_V)\in \Theta$. Therefore $P = M_V\cap R$.}
\end{proof}
\begin{thm}\label{IV}
Let $I$ be a hyperideal of the Krasner hyperdomain $R$ and let $V(R)$ be the set of all hypervaluation hyperdomains of the hyperfield of fractions $Q(R)$ of $R$ which contains $R$. Then $\overline{I} = \bigcap_{V\in V(R)}IV\cap R$.
\end{thm}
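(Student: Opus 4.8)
The plan is to prove the equality by establishing the two inclusions separately, using Proposition \ref{c} and the value-group ordering for $\overline{I}\subseteq\bigcap_{V}IV\cap R$, and the transform hyperring $R[I/r]$ together with the existence Theorem \ref{existance} for the reverse inclusion.

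For $\overline{I}\subseteq\bigcap_{V}IV\cap R$, I would fix $r\in\overline{I}$ and an arbitrary $V\in V(R)$ with hypervaluation $\nu$. Since $r\in R\subseteq V$, there are $a_i\in I^i\subseteq(IV)^i$ with $0\in r^n+a_1r^{n-1}+\cdots+a_n$, so $r$ is integral over the hyperideal $IV$ of $V$. First I would record that $IV$ is a $\nu$-hyperideal: if $x\in IV$ and $\nu(y)\geq\nu(x)$, then $\nu(yx^{-1})\geq\nu(1)$, so $yx^{-1}\in V$ and $y=(yx^{-1})x\in IV$, which is exactly condition (b) of Proposition \ref{c}. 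Then from $r^n\in-(a_1r^{n-1}+\cdots+a_n)$ and axiom (d) one gets $\nu(r)^n\geq\min_{1\leq i\leq n}\nu(a_i)\nu(r)^{n-i}$, hence $\nu(r)^{i_0}\geq\nu(a_{i_0})$ for some index $i_0$. Writing $a_{i_0}\in(IV)^{i_0}$ as products of $i_0$ elements of $IV$ and selecting a least-value factor $d\in IV$ yields $\nu(a_{i_0})\geq\nu(d)^{i_0}$, so $\nu(r)^{i_0}\geq\nu(d)^{i_0}$ and, by torsion-freeness of the totally ordered group $G$, $\nu(r)\geq\nu(d)$. The $\nu$-hyperideal property gives $r\in IV$, so $r\in IV\cap R$; as $V$ was arbitrary, $r\in\bigcap_{V}IV\cap R$.

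For the reverse inclusion I would argue contrapositively: given $r\in R$ with $r\notin\overline{I}$ (we may assume $r\neq 0$, since $0\in I\subseteq\overline{I}$ by Remark \ref{remark}(3)), I produce a single $V\in V(R)$ with $r\notin IV$. Consider the transform hyperring $W=R[I/r]\subseteq Q(R)$; it is a Krasner hyperdomain with $Q(W)=Q(R)$, and a direct computation identifies $W=\bigcup_{N\geq 0}(I+(r))^N/r^N$ and $IW=\bigcup_{N\geq 0}I(I+(r))^N/r^N$. Hence $r\in IW$ would force $r^{n}\in I(I+(r))^{n-1}$ for some $n$, that is $(I+(r))^n=I(I+(r))^{n-1}$, which by Lemma \ref{2.9} means $r\in\overline{I}$; so $r\notin\overline{I}$ gives $r\notin IW$, equivalently $1\notin(I/r)W$, and the proper hyperideal $(I/r)W$ lies in some maximal hyperideal $\mathfrak{m}$ of $W$. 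Applying Theorem \ref{existance} to the hyperdomain $W$ and the prime $\mathfrak{m}$ furnishes a hypervaluation hyperdomain $V\supseteq W\supseteq R$ of $Q(R)$ with a maximal hyperideal $M_V$ satisfying $M_V\cap W=\mathfrak{m}\supseteq(I/r)W$. Then $a/r\in M_V$ for every $a\in I$, so $(1/r)IV\subseteq M_V\subsetneq V$; if $r\in IV$ then $1=(1/r)r\in(1/r)IV\subseteq M_V$, a contradiction. Therefore $r\notin IV\supseteq IV\cap R$, so $r\notin\bigcap_{V}IV\cap R$.

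The main obstacle is this reverse inclusion: building a hypervaluation hyperdomain that separates $r$ from $IV$. The delicate points are (i) verifying the explicit descriptions of $W=R[I/r]$ and of the fractional hyperideal $(I/r)W$ purely in terms of the hyperoperations, so that the passage $r\in IW\Leftrightarrow r\in\overline{I}$ through Lemma \ref{2.9} is legitimate (this is where the paper's standing condition $\{y:y\in\sum_{i=1}^n y_i-y_i\text{ for some }y_i\in I\}=0$, used in Corollary \ref{2.7} and Theorem \ref{2.10}, is implicitly needed to keep the $0\in\cdots$ membership relations from collapsing), and (ii) applying Theorem \ref{existance} to $W$ rather than to $R$ and correctly contracting $M_V$ back to $\mathfrak{m}$. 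In the forward inclusion the only subtlety is the cancellation $\nu(r)^{i_0}\geq\nu(d)^{i_0}\Rightarrow\nu(r)\geq\nu(d)$ in the totally ordered abelian group and the extraction of the single witness $d\in IV$ from $a_{i_0}\in(IV)^{i_0}$.
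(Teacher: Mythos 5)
Your proof is correct, and it splits into one half that mirrors the paper and one half that is genuinely different. For the inclusion $\bigcap_{V\in V(R)}IV\cap R\subseteq\overline{I}$ you follow essentially the paper's route: both arguments hinge on the transform hyperring $R[\frac{I}{r}]$ and on Theorem \ref{existance} applied to a prime hyperideal of that hyperring. The paper argues directly (if $r\in\bigcap_{V}IV\cap R$, then $\frac{I}{r}V=V$ for every hypervaluation hyperdomain $V$ containing $R[\frac{I}{r}]$, forcing $1\in\frac{I}{r}R[\frac{I}{r}]$, which unravels at once into an integral equation for $r$ over $I$), while you argue contrapositively through Lemma \ref{2.9} and a maximal hyperideal of $R[\frac{I}{r}]$; your version makes explicit why $\frac{I}{r}R[\frac{I}{r}]$ is proper, a point the paper compresses into ``it follows easily''. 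The forward inclusion is where you diverge. The paper passes from $r\in\overline{I}$ to $r\in\overline{IV}$ via Remark \ref{remark}(8), reduces $IV$ to a principal hyperideal $xV$ using total ordering by divisibility in $V$, divides the integral equation by powers of $x$, and invokes Proposition \ref{def} (every hypervaluation Krasner hyperring is integrally closed) to get $\frac{r}{x}\in V$, hence $r\in xV=IV$. You instead work with the hypervaluation $\nu$ determined by $V$, verify that $IV$ satisfies criterion (b) of Proposition \ref{c}, and extract $\nu(r)\geq\nu(d)$ for a least-value factor $d\in I$ using axiom (d) and cancellation of powers in the totally ordered group. Your route buys a cleaner treatment of the step the paper glosses over --- the assertion ``$IV=xV$ for some $x\in I$'' is only legitimate after replacing $I$ by the finitely generated sub-hyperideal carrying the coefficients $a_i$, since $I$ itself need not be finitely generated --- and it dispenses with Proposition \ref{def} entirely; the paper's route, in exchange, never needs the value group, only the divisibility definition of a hypervaluation hyperring.

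One correction to your own commentary: the standing hypothesis $\{y:y\in\sum_{i=1}^{n}y_i-y_i \mbox{ for some } y_i\in I\}=0$ is \emph{not} needed for the passage through Lemma \ref{2.9}, so your argument does prove Theorem \ref{IV} as stated. In a canonical hypergroup, reversibility ($z\in x+y$ implies $x\in z-y$) converts $r^{n}\in c_1r^{n-1}+\cdots+c_n$ into $0\in r^{n}-c_1r^{n-1}-\cdots-c_n$ with $-c_i\in I^{i}$, which is exactly the required integral equation; no collapsing condition on hypersums is involved. Had that hypothesis genuinely been required, your proof would only establish a weaker statement than the theorem, which carries no such assumption.
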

\begin{proof}
{ Suppose that $r \in \overline{I} \subseteq \bigcap_{V\in V(R)}\overline{I}V\cap R$. By Remark part (8) of \ref{remark}, $\overline{I}V\subseteq \overline{IV}$, and so $r \in \bigcap_{V\in V(R)}\overline{IV}\cap R$. It follows that there are elements $a_1, \ldots , a_n \in I^iV$ $(1 \leq n)$ such that $0 \in r^n + a_1r^{n-1} + a_2r^{n-2} + \cdots + a_{n-1}r + a_n$. Since for every $x,y\in V$, we conclude that $\frac{x}{y}\in V$ or $\frac{y}{x}\in V$, this implies that $x\in yV$ or $y\in xV$. Thus $IV= xV$, for some $x\in I$, and hence $r \in \overline{xV}$. Therefore, there are $m$ elements $b_1, \ldots , b_m \in V$ $(1 \leq m)$ such that $0 \in r^m + b_1xr^{m-1} + b_2x^2r^{m-2} + \cdots + b_{m-1}x^{m-1}r + b_mx^{m}$. This means that $0 \in (\frac{r}{x})^m + b_1(\frac{r}{x})^{m-1} + \cdots + b_{m-1}(\frac{r}{x}) + b_m$. But $\frac{r}{x}\in Q(R)$ and every hypervaluation Krasner hyperring is integrally closed, by Proposition \ref{def}. Hence $\frac{r}{x}\in V$, and thus $r\in xV\subseteq IV$. Therefore, $\overline{I} \subseteq \bigcap_{V\in V(R)}IV\cap R$. For the opposite inclusion, let $r\in \bigcap_{V\in V(R)}IV\cap R$. For every hypervaluation hyperdomain $V$ between the polynomial hyperring $R[\frac{I}{r}]$ and $Q(R)$, we have $r\in IV$, and hence $\frac{I}{r}V = V$. Since by Theorem \ref{existance}, there exists a hypervaluation hyperdomain $V$ of $Q(R)$ contains $R[\frac{I}{r}]$, it follows easily that $\frac{I}{r}V\cap R[\frac{I}{r}] = \frac{I}{r}R[\frac{I}{r}] = R[\frac{I}{r}]$. Therefore $1\in \frac{I}{r}R[\frac{I}{r}]$. This implies that $1\in \sum_{i=1}^n\frac{a_i}{r^i}$ where $a_i \in I^i$ for every $i\in \{1, \ldots , n\}$, and thus $0\in r^n -  a_1r^{n-1} - a_2r^{n-2} - \cdots - a_{n-1}r - a_n$. Hence  $r \in \overline{I}$, as desired.
}
\end{proof}
In the light of Theorem \ref{IV}, we state the following corollary.
\begin{cor}
{\bf(a)} The integral closure of a Krasner hyperdomain $R$ is the intersection of all hypervaluation hyperdomains of the hyperfield of fractions $Q(R)$  containing $R$.

{\bf(b)} Every prime hyperideal is a hypervaluation hyperideal.
\end{cor}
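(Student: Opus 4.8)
The plan is to prove part (a) by an argument parallel to that of Theorem \ref{IV}, and to deduce part (b) in one line from Theorem \ref{existance}. Write $\overline{R}$ for the integral closure of $R$ in $Q(R)$ and let $V(R)$ be the set of hypervaluation hyperdomains of $Q(R)$ containing $R$, as in Theorem \ref{IV}. The inclusion $\overline{R}\subseteq\bigcap_{V\in V(R)}V$ is the easy half: if $x\in\overline{R}$ satisfies $0\in x^n+a_1x^{n-1}+\cdots+a_n$ with all $a_i\in R$, then since $R\subseteq V$ this same relation exhibits $x$ as integral over $V$; as $V$ is integrally closed by Proposition \ref{def}, we get $x\in V$, and since $V$ was arbitrary $x$ lies in the intersection.

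For the reverse inclusion $\bigcap_{V\in V(R)}V\subseteq\overline{R}$ I would argue contrapositively: given $x\in Q(R)$ with $x\notin\overline{R}$ (so $x\neq 0$), I would construct a single $V\in V(R)$ with $x\notin V$. The key claim is that $x^{-1}$ is a non-unit of the subhyperdomain $R[x^{-1}]$ of $Q(R)$. Indeed, were $x^{-1}$ a unit then $x\in R[x^{-1}]$, so $x\in\sum_{i=0}^m a_ix^{-i}$ for some $a_i\in R$; multiplying this membership by $x^m$ and using distributivity of $\cdot$ over $+$ gives $0\in x^{m+1}-a_0x^m-\cdots-a_m$, a monic integral relation for $x$ over $R$, contradicting $x\notin\overline{R}$. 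Granting the claim, $x^{-1}$ lies in some maximal hyperideal $M'$ of $R[x^{-1}]$ by \cite[Corollary 3.9]{Harijani}. Applying Theorem \ref{existance} to the hyperdomain $R[x^{-1}]$, whose hyperfield of fractions is again $Q(R)$, and to the prime $M'$ yields a hypervaluation hyperdomain $V\supseteq R[x^{-1}]\supseteq R$ with maximal hyperideal $M_V$ satisfying $M_V\cap R[x^{-1}]=M'$. Then $x^{-1}\in M_V$ is a non-unit of $V$, whence $x\notin V$, since $x\in V$ together with $x\cdot x^{-1}=1$ would make $x^{-1}$ a unit. This $V$ witnesses $x\notin\bigcap_{V\in V(R)}V$, finishing part (a).

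Part (b) is immediate from Theorem \ref{existance}: for a prime hyperideal $P$ of $R$ that theorem supplies a hypervaluation hyperdomain $V$ of $Q(R)$ containing $R$ and a maximal hyperideal $M$ of $V$ with $M\cap R=P$; taking $J=M$ realizes $P=J\cap R$ in the form demanded by the definition of a hypervaluation hyperideal.

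I expect the main obstacle to be the non-unit claim in part (a): one must verify, working with the set-valued hyperaddition, that the membership $x\in\sum_{i=0}^m a_ix^{-i}$ really can be cleared of denominators to a genuine monic dependence $0\in x^{m+1}+b_1x^m+\cdots+b_{m+1}$ with $b_i\in R$, using the reversibility axiom of the canonical hypergroup together with distributivity. The remaining checks, that $R[x^{-1}]$ is a hyperdomain with hyperfield of fractions $Q(R)$ so that Theorem \ref{existance} applies, are routine given $R\subseteq R[x^{-1}]\subseteq Q(R)$.
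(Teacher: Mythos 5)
Your proposal is correct, and your part (b) coincides with the paper's proof (both read it off from Theorem \ref{existance} by taking $J=M$), but your part (a) takes a genuinely different route. The paper deduces (a) from Theorem \ref{IV}: writing $r=\frac{x}{y}\in\bigcap_{V\in V(R)}V$ with $x,y\in R$, $y\neq 0$, it notes $x\in\bigcap_{V}yV\cap R$, invokes Theorem \ref{IV} for the principal hyperideal $(y)$ to get $x\in\overline{(y)}$, and divides the resulting relation $0\in x^n+b_1yx^{n-1}+\cdots+b_ny^n$ by $y^n$ to conclude $r\in\overline{R}$; the reverse inclusion $\overline{R}\subseteq\bigcap_{V}V$ is your easy half and is merely asserted there. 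You bypass Theorem \ref{IV} entirely and run the classical valuation-theoretic argument: if $x\notin\overline{R}$, then $x^{-1}$ is a non-unit of $R[x^{-1}]$, hence lies in a maximal hyperideal $M'$, and Theorem \ref{existance} applied to $R[x^{-1}]$ (whose hyperfield of fractions is again $Q(R)$) gives $V\in V(R)$ with $x^{-1}\in M_V$, so $x\notin V$. The obstacle you flag does go through: from $x\in\sum_{i=0}^m a_ix^{-i}$, multiplying by $x^m$ (the single-valued multiplication distributes over the hypersum) gives $x^{m+1}\in\sum_{i=0}^m a_ix^{m-i}$, and then $0\in x^{m+1}-x^{m+1}\subseteq x^{m+1}+\sum_{i=0}^m(-a_i)x^{m-i}$, a monic relation with coefficients in $R$; moreover, applying Theorem \ref{existance} to an intermediate hyperring such as $R[x^{-1}]$ is exactly the move the paper itself makes (with $R[\frac{I}{r}]$) inside the proof of Theorem \ref{IV}, so nothing foundationally new is required. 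The trade-off: the paper's route is two lines once Theorem \ref{IV} is available, while yours is self-contained modulo Theorem \ref{existance} and Proposition \ref{def}, avoids the most delicate theorem in the paper, and mirrors the classical proof that the integral closure of a domain equals the intersection of the valuation rings containing it.
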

\begin{proof}
{{\bf(a)} Suppose that $r\in \bigcap_{V\in V(R)}V$ where $V(R)$ is the set of all hypervaluation hyperdomains of $Q(R)$ containing $R$. Then $r = \frac{x}{y}$ such that $y\neq 0$. Hence $x\in \bigcap_{V\in V(R)}yV$, and so by Theorem \ref{IV} $x\in \overline{(y)}$. This implies that $0 \in x^n + b_1yx^{n-1} + b_2y^2x^{n-2} + \cdots + b_{n-1}y^{n-1}x + b_ny^{n}$ for some $b_1, \ldots , b_n \in R$ $(1 \leq n)$. Thus $0 \in (\frac{x}{y})^n + b_1(\frac{x}{y})^{n-1} + b_2(\frac{x}{y})^{n-2} + \cdots + b_{n-1}\frac{x}{y} + b_n$, and so $r\in \overline{R}$. But $\overline{R} \subseteq \bigcap_{V\in V(R)}V$, so that $\overline{R} = \bigcap_{V\in V(R)}V$.

{\bf(b)} It follows from Theorem \ref{existance}.}
\end{proof}

 \par In the sequel, we generalize some other results in valuation ring theory. Actually, we find a relationship between hypervaluation hyperideals and primary hyperideals. First, recall that a hyperideal $Q$ in the Krasner hyperring $R$ is primary if $Q$ is proper in $R$ and for every $x,y \in R$ such that $xy \in Q$ we have either $x \in Q$ or $y \in \sqrt{Q}$ (see \cite[Definition 8.1]{Ramaruban}), and a hyperideal $I$ in the Krasner hyperring $R$ is said to be normal if for every $r \in R$, we have $r + I - r \subseteq I$ (see \cite[Definition 3.2.1]{Davvaz}).

\begin{prop}\label{3.5}
Let $I$ be a hyperideal of the Krasner hyperdomain $R$ and let $\nu: Q(R) \rightarrow G_{\infty}$ be a hypervaluation on $Q(R)$ and nonnegative on $R$. Then the following statements hold:

{\bf(a)} If $I$ is a $\nu$-hyperideal and $A,B$ are subsets of $R$ such that $AB \subseteq I$, then either $\{a^2 : a \in A\} \subseteq I$ or $\{b^2 : b \in B\} \subseteq I$. Moreover, if every hyperideal of $R$ is a hypervaluation hyperideal, then $R$ is a hypervaluation hyperdomain.

{\bf(b)} If $I_1,\ldots , I_n$ are $\nu$-hyperideals of $R$ such that for every $i \in \{1, \ldots , n\}$ there exists an element $x_i \in R \setminus I_i$, then $\prod_{i=1}^nx_i \notin I_1\cdots I_n$. Moreover, if $I^n \subseteq I_i^n$ for some $i \in \{1, \ldots , n\}$, then $I \subseteq I_i$.

{\bf(c)} If for every $n$, $I^n$ is a $\nu$-hyperideal of $R$, then $\bigcap_{n=1}^{\infty} I^n$ is a prime hyperideal of $R$.

{\bf(d)} If $R$ is a hypervaluation hyperdomain and $P$ is a prime hyperideal of $R$, then the intersection of the $P$-primary hyperideals of $R$ is prime.

{\bf(e)} Suppose that $\Theta$ is the set of all hypervaluation hyperideals of $R$ such that for every $I_1,I_2 \in \Theta$, there exists $I_3 \in \Theta$ with $I_3 \subseteq I_1\cap I_2$. Then $\sqrt{\bigcap_{I_i\in \Theta}I_i}$ is a prime hyperideal of $R$.
\end{prop}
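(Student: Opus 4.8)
The common engine for all five parts is the characterisation of $\nu$-hyperideals furnished by Proposition \ref{c}: a hyperideal $I$ is a $\nu$-hyperideal exactly when it is upward closed for $\nu$, so that $x\notin I$ forces $\nu(x)<\nu(z)$ for every $z\in I$. Writing the value group additively, axioms (c) and (d) of a hypervaluation become $\nu(xy)=\nu(x)+\nu(y)$ and $z\in x+y\Rightarrow\nu(z)\ge\min\{\nu(x),\nu(y)\}$, with the order on $G$ total. The plan is to reduce (a), (b), (c), (e) to purely order-theoretic comparisons of values, and to handle (d) separately through the ideal theory of the hypervaluation hyperdomain.

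For (a) I would fix $a_0\in A$ with $a_0^2\notin I$ and take any $b\in B$; from $a_0 b\in I$ and the dichotomy $\nu(b)\ge\nu(a_0)$ or $\nu(b)<\nu(a_0)$, the first case gives $\nu(b^2)\ge\nu(a_0 b)$, hence $b^2\in I$, while the second gives $\nu(a_0^2)\ge\nu(a_0 b)$, hence $a_0^2\in I$, contradicting the choice of $a_0$. Thus $\{b^2:b\in B\}\subseteq I$. For the ``moreover'', given $a=x/y\in Q(R)$ I would apply this to the principal hyperideal $(xy)$, a $\nu$-hyperideal by hypothesis, with $A=\{x\}$, $B=\{y\}$: one of $x^2,y^2$ lies in $(xy)$, and cancelling the factor in the hyperdomain (multiplication is a genuine operation) yields $x/y\in R$ or $y/x\in R$.

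For (b), if $\prod_{i}x_i\in I_1\cdots I_n$ then $\prod_{i}x_i$ lies in a finite sum of products $y_{1k}\cdots y_{nk}$ with $y_{jk}\in I_j$; since $x_i\notin I_i$ gives $\nu(x_i)<\nu(y_{ik})$, summing these strict inequalities yields $\nu(\prod_i x_i)<\nu(\prod_j y_{jk})$ for every $k$, contradicting axiom (d). The ``moreover'' is the special case $I_1=\cdots=I_n=I_i$ and $x_1=\cdots=x_n=x$ for a putative $x\in I\setminus I_i$, noting $x^n\in I^n\subseteq I_i^n$. Part (c) is then immediate: for $ab\in\bigcap_n I^n$ with $a\notin I^m$, applying (b) to $I^m$ and $I^n$ rules out $b\notin I^n$, so $b\in I^n$ for all $n$; properness follows from $\bigcap_n I^n\subseteq I$. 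Part (e) again runs through (a): from $(ab)^n\in D:=\bigcap_{\Theta}I$ one gets, for each $I\in\Theta$, either $a^{2n}\in I$ or $b^{2n}\in I$; if neither holds for all $I$, pick $I_1,I_2\in\Theta$ witnessing the two failures and use directedness to obtain $I_3\subseteq I_1\cap I_2$, on which (a) forces one of $a^{2n},b^{2n}$ into $I_3$ and hence into $I_1$ or $I_2$, a contradiction. So $a^{2n}\in D$ or $b^{2n}\in D$, i.e. $a\in\sqrt D$ or $b\in\sqrt D$.

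The real obstacle is (d). Here I would set $D=\bigcap\{Q:\sqrt Q=P\}$ and aim to prove $D=\bigcup\{P':P'\text{ prime},\,P'\subsetneq P\}$, the right-hand side being prime as the union of a chain of prime hyperideals in the totally ordered ideal lattice of the hypervaluation hyperdomain. The inclusion $\supseteq$ is easy: any prime $P'\subsetneq P$ sits inside every $P$-primary $Q$, since otherwise $Q\subseteq P'$ by totality, forcing $\sqrt Q\subseteq P'\subsetneq P$. The delicate inclusion $\subseteq$ is where the work concentrates: given $z\in D$ lying in no prime below $P$, I must manufacture a $P$-primary hyperideal omitting $z$, the natural candidate being the $\nu$-hyperideal $\{w:\nu(w)\ge 2\nu(z)\}$, whose radical one checks equals $P$ while $z$ itself is excluded. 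Verifying that this construction is genuinely $P$-primary (properness, primariness, and radical exactly $P$) is the main technical hurdle, and it amounts to a translation into the archimedean-class structure of the value group $G$.
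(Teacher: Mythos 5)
Your treatments of parts (a), (b), (c) and (e) are correct and essentially identical to the paper's: like the paper, you run everything through the value-comparison criterion of Proposition \ref{c}, and your version of (c) is even slightly cleaner (applying (b) directly to $I^m$ and $I^n$ avoids the paper's implicit appeal to the chain $I^{n+1}\subseteq I^n$). The genuine problem is part (d), which you yourself flag as unfinished, and the step you defer is not a routine verification --- it is false as stated. Your candidate $\{w:\nu(w)\geq 2\nu(z)\}$ is just the principal hyperideal $(z^2)$, and when $P$ is \emph{not} maximal this hyperideal need not be $P$-primary even though its radical is $P$. Concretely, take a valuation domain (a Krasner hyperring with singleton addition) whose value group is $\mathbb{Z}\oplus\mathbb{Z}$ ordered lexicographically, and let $P=\{w:\nu(w)\geq(1,b)\text{ for some }b\}$ be the height-one prime; the only prime strictly below $P$ is $(0)$, so any $z$ with $\nu(z)=(1,0)$ satisfies your hypothesis. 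Then $(z^2)=\{w:\nu(w)\geq(2,0)\}$ has radical exactly $P$, yet choosing $x,y$ with $\nu(x)=(2,-1)$ and $\nu(y)=(0,1)$ gives $xy\in(z^2)$ and $y\notin P=\sqrt{(z^2)}$ but $x\notin(z^2)$: the hyperideal is not primary. So in the non-maximal case your construction produces nothing, and the inclusion $D\subseteq\bigcup\{P':P'\text{ prime},\ P'\subsetneq P\}$ is left without proof.

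The missing idea is the paper's very first move: pass to $R_P$, so that one may assume $P$ is the maximal hyperideal. Only then does ``radical equal to $P$'' force ``$P$-primary'' (the standard maximal-radical fact), and indeed with $P$ maximal your candidate $(z^2)$ does become $P$-primary, so your union-of-primes characterization could then be completed. After localizing, the paper itself argues more economically and never needs to exhibit a primary hyperideal missing a prescribed element: it takes $x\in P\setminus B$ (where $B$ is the intersection of the $P$-primary hyperideals), uses the total order on hyperideals to get $Q\subset(x)$ for some $P$-primary $Q$, deduces $\sqrt{(x^n)}=P$ so that each $(x^n)$ is $P$-primary, notes that every $P$-primary hyperideal contains some $(x^n)$, concludes $B=\bigcap_{n}(x)^n$, and finishes by part (c) (legitimate because in a hypervaluation hyperring every hyperideal is a $\nu$-hyperideal, taking $V=R$). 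If you want to retain your characterization of $D$ as the union of the primes strictly below $P$, you must (i) first carry out the localization and justify the correspondence of $P$-primary hyperideals between $R$ and $R_P$, and only then (ii) run the $(z^2)$ argument; as written, step (ii) fails precisely because step (i) is absent.
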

\begin{proof}
{{\bf(a)} Suppose that there is an element $a \in A$ such that $a^2 \notin I$. Since $I$ is a hypervaluation hyperideal, there is a hypervaluation hyperring $V_{\nu}$ of $Q(R)$ containing $R$ such that $I = IV_{\nu} \cap R$, and since $G$ is totally ordered and for any $b \in B$, we have either $\nu(b) \geq \nu(a)$ or $\nu(a) \geq \nu(b)$. But for any $b \in B$ we have $ab \in I$. Then $b^2 \in I$ for any $b \in B$, and thus $\{b^2 : b \in B\} \subseteq I$. The "moreover" statement is clear since for every $\frac{a}{b} \in Q(R)$ we have either $a^2 \in (ab)$ or $b^2 \in (ab)$, and thus either $\frac{a}{b} \in R$ or $\frac{b}{a} \in R$.

{\bf(b)} Suppose that $x_i \in R \setminus I_i$ for every $i \in \{1, \ldots , n\}$. Then $\nu(a_i) > \nu(x_i)$ for every $a_i \in I_i$, and hence $\prod_{i=1}^n\nu(a_i) > \prod_{i=1}^n\nu(x_i)$ for every $a_i \in I_i$ and every $i \in \{1, \ldots , n\}$. Therefore $\prod_{i=1}^nx_i \notin I_1\cdots I_n$. For the ``moreover" statement, suppose that $I \subseteq I_i$. Then there exists $x\in I \setminus I_i$, and so $x^n \notin I_i^n$, a contradiction.

{\bf(c)} Let $a,b \in R$ such that $ab \in \bigcap_{n=1}^{\infty} I^n$. Then $ab \in I^{2n}$ for each $n$, and by  part (b) either $a \in I^{n}$ or $b \in I^{n}$. Hence, $a \in \bigcap_{n=1}^{\infty} I^n$ or $b \in \bigcap_{n=1}^{\infty} I^n$.

{\bf(d)} Without loss of generality, assume that $R = R_P$, and so by \cite[Remark 7.3]{Ramaruban}, $R_P$ is a local hyperring with the only maximal hyperideal $P$ of $R$. Let $B$ be the intersection of all $P$-primary hyperideals of $R$. If $B = P$, we are done. So suppose that $B \subset P$. Then there exists $x \in P \setminus B$, and hence $x \notin Q$ for some $P$-primary hyperideal $Q$. But $R$ is hypervaluation, then $Q \subset (x) \subseteq P$. On the other hand,  $x^n \in Q_{\alpha}$ for some integer $n$ and $P$-primary hyperideal $Q_{\alpha}$, which means that $(x^n) \subseteq Q_{\alpha}$. Since $\sqrt{(x^n)} = P$, $(x^n)$ is a $P$-primary hyperideal of $R$. Therefore, $\bigcap_{n=1}^{\infty}(x)^n = B$. Thus, by part (c), the intersection of the $P$-primary hyperideals of $R$ is prime.

{\bf(e)} Let $ab \in \sqrt{\bigcap_{I_i\in \Theta}I_i}$ for some $a,b \in R$. Then $a^nb^n \in \bigcap_{I_i\in \Theta}I_i$ for some positive integer $n$. Since $I_i$'s are hypervaluation hyperideals, by part (a), either $a^{2n} \in I_i$ or $b^{2n} \in I_i$ for each $i$. If $a^{2n} \notin I_i$ and $b^{2n} \in I_j$ for some $I_i,I_j \in \Theta$, then there exists $I_k \in \Theta$ such that $I_k \subseteq I_i\cap I_j$. It means that $a^{2n},b^{2n} \notin I_k$, a contradiction. Hence, we can assume that $a^{2n} \in \bigcap_{I_i\in \Theta}I_i$. Thus $\sqrt{\bigcap_{I_i\in \Theta}I_i}$ is a prime hyperideal of $R$.
}
\end{proof}
\par Now, we are ready to state our main result of this section. Let $R$ be a Krasner hyperdomain, $\Theta$ be the set of all hypervaluation hyperideals of $R$ and let $\Delta$ be the set of all $P$-primary hyperideals of $R$ where $P$ is a prime hyperideal. We present a relationship between $\Theta$ and $\Delta$.

\begin{thm}
Suppose that $R$ is a local Krasner hyperdomain such that $\Delta \subseteq \Theta$ if and only if $\Delta$ is totally ordered by inclusion and $\bigcap_{I_i\in \Delta}I_i$ is a prime hyperideal of $R$. Then the equivalence is true for any (not necessarily local) Krasner hyperdomain.

\end{thm}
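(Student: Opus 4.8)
The plan is to reduce the general case to the local case supplied as hypothesis, by localizing at the fixed prime hyperideal $P$. For an arbitrary Krasner hyperdomain $R$ and a prime hyperideal $P$, I would pass to $R_P$, which is a local Krasner hyperdomain with unique maximal hyperideal $PR_P$ and unchanged hyperfield of fractions $Q(R_P)=Q(R)$. Since the asserted biconditional is assumed to hold for every local Krasner hyperdomain, it holds for $R_P$, with $\Theta$ replaced by the set $\Theta(R_P)$ of hypervaluation hyperideals of $R_P$ and $\Delta$ replaced by the set $\Delta'$ of $PR_P$-primary hyperideals of $R_P$. The whole argument then consists of transporting each of the three predicates appearing in the biconditional between $R$ and $R_P$ along the extension–contraction maps $I\mapsto IR_P$ and $J\mapsto J\cap R$.

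First I would set up the correspondence between primary hyperideals. Using the localization results of \cite{Ramaruban}, these maps give an inclusion-preserving, inclusion-reflecting bijection between the set $\Delta$ of $P$-primary hyperideals of $R$ (note that $\sqrt{I}=P$ forces $I\subseteq P$, so these survive localization at $P$) and the set $\Delta'$ of $PR_P$-primary hyperideals of $R_P$, with $IR_P\cap R = I$ and $(J\cap R)R_P = J$. Because this bijection is an order isomorphism, $\Delta$ is totally ordered by inclusion if and only if $\Delta'$ is; and because localization commutes with the relevant intersection and sends primes lying inside $P$ to primes and back, $\bigcap_{I\in\Delta}I$ is a prime hyperideal of $R$ if and only if $\bigcap_{J\in\Delta'}J$ is a prime hyperideal of $R_P$. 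These two transfers are essentially formal once the bijection and the compatibility of $\sqrt{\ \cdot\ }$, $\bigcap$ and localization from \cite{Ramaruban,Harijani} are in hand.

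The crux is the remaining transfer: for $I\in\Delta$ one must show $I\in\Theta(R)$ if and only if $IR_P\in\Theta(R_P)$, that is, that being a hypervaluation hyperideal is a local property for $P$-primary hyperideals. The difficulty is that a hypervaluation hyperring $V$ with $R\subseteq V\subseteq Q(R)$ witnessing $I=J\cap R$ need not contain $R_P$, so the witness does not localize for free. To get around this I would use the intrinsic characterization of Proposition \ref{c}: if $I$ is a $\nu$-hyperideal then, by Example \ref{example}, $\sqrt{I}=P$ is again a $\nu$-hyperideal and is prime, which forces $\nu$ to be centered at $P$, whence $\nu(s)=0$ for every $s\in R\setminus P$. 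Consequently $\nu$ remains nonnegative on $R_P$, its hypervaluation hyperring $V_\nu$ contains $R_P$, and $(IR_P)V_\nu\cap R_P = IR_P$ exhibits $IR_P$ as a $\nu$-hyperideal of $R_P$ by Proposition \ref{c}(c). Conversely, if $IR_P = J'\cap R_P$ for a hypervaluation hyperring $V'$ between $R_P$ and $Q(R)$ and a hyperideal $J'$ of $V'$, then $V'\in V(R)$ and $I = IR_P\cap R = J'\cap R$, so $I\in\Theta(R)$. Theorems \ref{existance} and \ref{IV} guarantee that enough hypervaluation hyperdomains exist and that the contracted hyperideals behave correctly, closing this step.

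With the three transfers established, the equivalence for $R_P$ pulls back verbatim to $R$: $\Delta\subseteq\Theta(R)$ holds iff $\Delta'\subseteq\Theta(R_P)$ holds iff $\Delta'$ is a chain with prime intersection iff $\Delta$ is a chain with prime intersection. The main obstacle I anticipate is precisely the hypervaluation transfer of the previous paragraph, and within it the verification that every hypervaluation witnessing a $P$-primary hypervaluation hyperideal is centered at $P$ and therefore nonnegative on $R_P$; the primary-correspondence and intersection steps are routine given the cited localization machinery.
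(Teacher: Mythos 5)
Your overall reduction (localize at $P$ and transport the three predicates along extension--contraction) is the same as the paper's, and your primary-correspondence and prime-intersection transfers are essentially the paper's as well. The problem is the step you yourself identify as the crux. Your argument that $I\in\Theta(R)$ implies $IR_P\in\Theta(R_P)$ rests on the claim that $P$-primariness of a $\nu$-hyperideal forces $\nu$ to be \emph{centered} at $P$, i.e.\ $\nu(s)=0$ for every $s\in R\setminus P$. This is false. What Proposition \ref{c} and Example \ref{example} actually give is only one inclusion: every element of a proper $\nu$-hyperideal has positive value, so $P=PV_\nu\cap R\subseteq M_\nu\cap R$; the center $M_\nu\cap R$ can be strictly larger than $P$. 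A counterexample already occurs in the classical case (a commutative domain is a Krasner hyperdomain with singleton addition): let $R=k[u,w]$, $I=P=(u)$, and let $\nu$ be the valuation on $k(u,w)$ with values in $\mathbb{Z}^{2}$ ordered lexicographically, determined by $\nu(u)=(1,0)$ and $\nu(w)=(0,1)$. Then $I$ is a prime (hence $P$-primary) $\nu$-hyperideal, yet $\nu(w)>0$ although $w\notin P$, so the center is $(u,w)\supsetneq P$ and $V_\nu$ does \emph{not} contain $R_P$, since $w^{-1}\in R_P\setminus V_\nu$. Consequently your exhibited witness $(IR_P)V_\nu\cap R_P=IR_P$ is unavailable, and the closing appeal to Theorems \ref{existance} and \ref{IV} does not repair this: Theorem \ref{existance} produces \emph{some} hypervaluation hyperdomain centered at $P$, but gives no reason that the given $I$ is contracted from a hyperideal of that particular hyperring.

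The paper closes exactly this gap by a different move: it does not keep $\nu$ fixed, but localizes the witness itself. Given $Q_\alpha V\cap R=Q_\alpha$, it forms $V_P=\{t/m : t\in V,\ m\in R\setminus P\}$, which is an overring of $V$ inside $Q(R)$, hence again a hypervaluation hyperring (for $a\in Q(R)$, either $a\in V\subseteq V_P$ or $a^{-1}\in V\subseteq V_P$) and it contains $R_P$. It then verifies $Q_\alpha V_P\cap R_P=Q_\alpha R_P$ by cross-multiplication: if $x=t/m=r/s$ with $t\in Q_\alpha V$, $r\in R$, $m,s\in R\setminus P$, then $0\in(ts-rm)u$ for some $u\in R\setminus P$, so $tsu=rmu\in Q_\alpha V\cap R=Q_\alpha$, and $P$-primariness of $Q_\alpha$ together with $mu\notin P$ gives $ru\in Q_\alpha$, hence $x=ru/su\in Q_\alpha R_P$. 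Note that the hypervaluation attached to $V_P$ is a proper quotient of $\nu$ (in the counterexample above it is the $u$-adic valuation, not $\nu$), which is precisely why any argument that insists on reusing the same $\nu$, as yours does, cannot succeed. If you replace your ``centered at $P$'' paragraph by this localization-of-the-witness argument, the rest of your proposal goes through and coincides with the paper's proof.
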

\begin{proof}
{ Let $R_P$ be a local Krasner hyperdomain with the maximal hyperideal $PR_P$ and let $Q_{\alpha}$'s $\in \Delta$. By \cite[Proposition 8.7]{Ramaruban}, the primary hyperideals of $R_P$ can be written in the form $Q_{\alpha}R_P$ and we have $Q_{\alpha} = Q_{\alpha}R_P\cap R$. Hence, $Q_{\alpha}$ is a hypervaluation hyperideal of $R$ whenever $Q_{\alpha}R_P$ is a hypervaluation hyperideal of $R_P$. Conversely, suppose that $Q_{\alpha}$ is a hypervaluation hyperideal of $R$. Then there exists a hypervaluation hyperring $V$ containing $R$ such that $Q_{\alpha}V\cap R = Q_{\alpha}$. It follows that $Q_{\alpha}V_P\cap R_P = Q_{\alpha}R_P$, since if $x \in Q_{\alpha}V_P\cap R_P$, then $x = \frac{t}{m} = \frac{r}{s}$ for some $r \in R$, $t \in Q_{\alpha}V$ and $m,s \in R \setminus P$. Hence, $0 \in (ts - rm)u$ for some $u \in R \setminus P$, and so $tsu = rmu \in Q_{\alpha}V\cap R = Q_{\alpha}$ (see \cite[page 63]{Ramaruban}). Since $m \in R \setminus P$ and $Q_{\alpha}$ is a $P$-primary hyperideal of $R$, we deduce that $ru\in Q_{\alpha}$, and hence $x = \frac{ru}{su} \in Q_{\alpha}R_P$. This means that $Q_{\alpha}R_P$ is a hypervaluation hyperideal. Now, since $Q_{\alpha} = Q_{\alpha}R_P\cap R$, it is easy to see that $\Delta = \{Q_{\alpha}R_P : Q_{\alpha}R_P$ is a $P$-primary hyperideal of $R_P\}$ is totally ordered by inclusion in $R_P$ if and only if $\Delta' = \{Q_{\alpha} : Q_{\alpha}$ is a $P$-primary hyperideal of $R\}$ is totally ordered by inclusion in $R$. It remains to show that $\bigcap_{Q_{\alpha}\in \Delta'}Q_{\alpha}$ is a prime hyperideal of $R$ if and only if $\bigcap_{Q_{\alpha}R_P\in \Delta}Q_{\alpha}R_P$ is a prime hyperideal of $R_P$. Let $x = \frac{a}{s}, y = \frac{b}{t} \in R_P$ be such that $xy \in \bigcap_{Q_{\alpha}R_P\in \Delta}Q_{\alpha}R_P$ and $x \notin \bigcap_{Q_{\alpha}R_P\in \Delta}Q_{\alpha}R_P$. Then there is $Q_{\beta}R_P\in \Delta$ such that $x\notin Q_{\beta}R_P$, and so $a\notin Q_{\beta}$. Since $xy \in \bigcap_{Q_{\alpha}R_P\in \Delta}Q_{\alpha}R_P$,  $ab \in \bigcap_{Q_{\alpha}R_P\in \Delta}Q_{\alpha}R_P\cap R = \bigcap_{Q_{\alpha}\in \Delta'}Q_{\alpha}$ which is prime. Hence $b \in \bigcap_{Q_{\alpha}R_P\in \Delta}Q_{\alpha}R_P\cap R = \bigcap_{Q_{\alpha}\in \Delta'}Q_{\alpha}$, and so the proof is complete.
}
\end{proof}
We close this paper with the following result.
\begin{thm}\label{3.10}
Let $R$ be a local Krasner hyperdomain with the maximal hyperideal $M$ and let $\Delta = \{Q_{\alpha} : Q_{\alpha}$ is a $M$-primary hyperideals of $R\}$ such that every hyperideal of $\Delta$ is normal and let $\Theta$ be the set of all hypervaluation hyperideals of $R$. If $\Delta \subseteq \Theta$, then $\Delta$ is totally ordered by inclusion and $\bigcap_{Q_{\alpha}\in \Delta}Q_{\alpha}$ is a prime hyperideal of $R$.
\end{thm}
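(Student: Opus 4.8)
The plan is to reduce both assertions to the single statement that $R$ is itself a hypervaluation hyperdomain. Once that is known, the set of all hyperideals of $R$ is totally ordered by inclusion (as recalled at the beginning of this section), so the subfamily $\Delta$ is automatically totally ordered; and since $M$, being maximal, is a prime hyperideal, Proposition~\ref{3.5}(d) applied with $P = M$ shows that $\bigcap_{Q_\alpha \in \Delta} Q_\alpha$, the intersection of all the $M$-primary hyperideals, is prime. Thus essentially the whole content of the theorem is concentrated in showing that the hypothesis $\Delta \subseteq \Theta$ forces $R$ to be hypervaluation.

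To establish that $R$ is hypervaluation I would fix $a,b \in R \setminus \{0\}$ and try to show $a/b \in R$ or $b/a \in R$; if $a$ or $b$ is a unit this is immediate, so I may assume $a,b \in M$. The mechanism is to locate an $M$-primary hyperideal $I$ with $ab \in I$ and to apply Proposition~\ref{3.5}(a): since $I \in \Delta \subseteq \Theta$ is a hypervaluation hyperideal and $\{a\}\cdot\{b\} = \{ab\} \subseteq I$, one obtains $a^2 \in I$ or $b^2 \in I$. Taking $I = (ab)$ and using that $R$ is a hyperdomain, $a^2 \in (ab)$ yields $a/b \in R$ while $b^2 \in (ab)$ yields $b/a \in R$, so $R$ is hypervaluation. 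As an alternative route to produce the ambient valuation structure, I would invoke Theorem~\ref{existance} with $P = M$ to obtain a hypervaluation hyperdomain $V$ of $Q(R)$ with $R \subseteq V$ and $M_V \cap R = M$, and then argue $V = R$: every valuation hyperideal of $V$ contracts to an $M$-primary (or to $M$) hyperideal of $R$, which by hypothesis already lies in $\Theta$, and this valuation-closedness of $R$, combined with Proposition~\ref{def} (every hypervaluation hyperring is integrally closed), should collapse the domination $R \subseteq V$ to an equality.

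The hard part is exactly the claim that $(ab)$ — or some suitable hyperideal trapping $ab$ — is $M$-primary, hence a hypervaluation hyperideal. When $M$ is the only nonzero prime this is automatic, but in general $\sqrt{(ab)}$ can be a proper sub-prime and one cannot place $(ab)$ in $\Delta$ directly; this is where the full strength of $\Delta \subseteq \Theta$ must be used, since requiring \emph{every} $M$-primary hyperideal to be an (in particular integrally closed) hypervaluation hyperideal is an extremely rigid condition that should itself constrain the dimension and make the radical of each relevant principal hyperideal equal to $M$. I expect the normality hypothesis on the members of $\Delta$ to enter precisely at this point: normality is what guarantees that the contractions, products and quotients of members of $\Delta$ remain Krasner hyperideals of the expected form (compare the repeated use of localization and contraction in the proof of the preceding theorem), so that the $M$-primary hyperideal furnished by the argument genuinely belongs to $\Theta$ and Proposition~\ref{3.5}(a) becomes applicable.

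Once $R$ is shown to be hypervaluation, I would assemble the two conclusions as above: total ordering of $\Delta$ from the total ordering of all hyperideals, and primeness of $\bigcap_{Q_\alpha \in \Delta} Q_\alpha$ from Proposition~\ref{3.5}(d). Alternatively, as soon as $\Delta$ is totally ordered it is downward directed, so Proposition~\ref{3.5}(e) gives that $\sqrt{\bigcap_{Q_\alpha\in\Delta}Q_\alpha}$ is prime, and it then remains only to check that this radical coincides with the intersection itself.
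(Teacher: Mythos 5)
Your reduction to ``$R$ is a hypervaluation hyperdomain'' is exactly where the proposal breaks down, and it is a gap the paper avoids rather than fills. Your mechanism needs an $M$-primary hyperideal $I$ containing $ab$ for which $a^2 \in I$ forces $a/b \in R$; the natural candidate $(ab)$ need not be $M$-primary, as you concede, and no replacement is supplied --- ``should itself constrain the dimension'' is not an argument. The paper's fix is different and is the key missing idea: for a fixed $Q_\alpha \in \Delta$ and $x,y \in M$ it works with $I = Q_\alpha^2 + (xy)$, which is automatically $M$-primary because $\sqrt{Q_\alpha^2 + (xy)} = M$, hence lies in $\Delta \subseteq \Theta$ and Proposition~\ref{c} (equivalently Proposition~\ref{3.5}(a)) applies. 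But the output is then only $x^2 \in Q_\alpha^2 + (xy)$ or $y^2 \in Q_\alpha^2 + (xy)$, not $x^2 \in (xy)$; writing $x^2 \in q + rxy$ with $q \in Q_\alpha^2$, using normality to get $x(x-ry) \subseteq x^2 - rxy \subseteq rxy + q - rxy \subseteq Q_\alpha^2$ (this is where the normality hypothesis actually enters, not in the contraction bookkeeping where you place it), and invoking Proposition~\ref{3.5}(b), one obtains merely $Q_\alpha + (x) \subseteq Q_\alpha + (y)$ or the reverse. In other words, the paper proves total ordering of hyperideals \emph{modulo each} $Q_\alpha$, not that $R$ itself is hypervaluation; applying this with $Q_\beta \cap Q_\gamma \in \Delta$ in place of $Q_\alpha$ already yields the total ordering of $\Delta$. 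Note also that the ``moreover'' clause of Proposition~\ref{3.5}(a) needs \emph{every} hyperideal of $R$ to be a hypervaluation hyperideal to conclude $R$ is hypervaluation; with only the $M$-primary ones in hand, your stronger claim is precisely what cannot be extracted (classically, in the Gilmer--Ohm theorem this theorem generalizes, such a ring need not be a valuation domain).

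The same unproved claim undermines your second conclusion: Proposition~\ref{3.5}(d) is stated for hypervaluation hyperdomains, so you may not apply it to $R$ with $P = M$. Your fallback route via Proposition~\ref{3.5}(e) is closer to what the paper does, but it postpones exactly the hard point, namely showing $\sqrt{\bigcap_{Q_\alpha \in \Delta} Q_\alpha} = \bigcap_{Q_\alpha \in \Delta} Q_\alpha$. The paper proves this by using the hypothesis $\Delta \subseteq \Theta$ to choose a hypervaluation overring $V$ with $Q_\alpha V \cap R = Q_\alpha$, applying Proposition~\ref{3.5}(d) \emph{inside} $V$ to the prime $\sqrt{Q_\alpha V}$, and contracting: the resulting prime $B = A \cap R$ satisfies $\bigcap_{Q_\alpha \in \Delta} Q_\alpha \subseteq B \subseteq Q_\alpha$ for every $\alpha$, which sandwiches the radical and gives the equality; only then does Proposition~\ref{3.5}(e), applicable because $\Delta$ is now known to be totally ordered and hence downward directed, finish the proof. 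Without an argument either for that equality or for $R$ being hypervaluation, neither half of the theorem is established by your proposal.
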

\begin{proof}
{Let $Q_{\alpha} \in \Delta$ be an $M$-primary hyperideal of $R$. First, we show that the set of hyperideals of the hyperring $R/Q_{\alpha}$ is totally ordered by inclusion. Let $x,y \in R$. If $x \notin M$ or $y \notin M$, then by \cite[Corollary 3.10]{Ramaruban}, $x$ or $y$ is unit, and we are done. So, suppose that $x,y \in M$. Then, by \cite[Proposition 4.7 (4)]{Ramaruban}, $\sqrt{Q_{\alpha}^2 + (xy)} = \sqrt{M} = M$. It follows that $Q_{\alpha}^2 + (xy) \in \Delta$, and so $Q_{\alpha}^2 + (xy) \in \Theta$. Hence, by Proposition \ref{c}, either $x^2 \in Q_{\alpha}^2 + (xy)$ or $y^2 \in Q_{\alpha}^2 + (xy)$, as $xy \in Q_{\alpha}^2 + (xy)$. Assume that $x^2 \in Q_{\alpha}^2 + (xy)$. There exist $q \in Q_{\alpha}^2$ and $r \in R$ such that $x^2 \in q + rxy$. Therefore, $x^2 - rxy \subseteq rxy + q - rxy \subseteq Q$, since $Q$ is normal. One may apply part (b) of Proposition \ref{3.5} to see that either $x \in Q_{\alpha}$ or $x - ry \subseteq Q_{\alpha}$. If $x \in Q_{\alpha}$, then $Q_{\alpha} + (x) = Q_{\alpha} \subseteq Q_{\alpha} + (y)$. If $x - ry \subseteq Q_{\alpha}$, then $x \in ry + x - ry$ since $0 \in ry - ry$, and again $Q_{\alpha} + (x) \subseteq Q_{\alpha} + (y)$, and so the set of hyperideals of the hyperring $R/Q_{\alpha}$ is totally ordered by inclusion. Now, let $Q_{\beta}, Q_{\gamma} \in \Delta$. Since $Q_{\beta}\cap Q_{\gamma} \in \Delta$, we have either $Q_{\beta}/(Q_{\beta}\cap Q_{\gamma}) \subseteq Q_{\gamma}/(Q_{\beta}\cap Q_{\gamma})$ or $Q_{\gamma}/(Q_{\beta}\cap Q_{\gamma}) \subseteq Q_{\beta}/(Q_{\beta}\cap Q_{\gamma})$, and so either $Q_{\gamma} \subseteq Q_{\beta}$ or $Q_{\beta}\subseteq Q_{\gamma}$. Thus $\Delta$ is totally ordered by inclusion. It remains to show that $\bigcap_{Q_{\alpha}\in \Delta}Q_{\alpha}$ is a prime hyperideal of $R$. Let $Q_{\alpha}\in \Delta$. Since $\Delta \subseteq \Theta$, there exists a hypervaluation hyperring $V$ containing $R$ such that $Q_{\alpha}V\cap R = Q_{\alpha}$. Suppose that $Q_V = Q_{\alpha}V$. Then $P_V = \sqrt{Q_V}$ is a prime hyperideal of $V$ (see Example \ref{example}), and $P_V\cap R = P$. Assume that $A$ is the intersection of all $P_V$-primary hyperideals of $V$ such that $B = A \cap R$. By Proposition \ref{3.5} (d), $A$ is a prime hyperideal of $V$, and hence $B = A \cap R$ is a prime hyperideal of $R$. Since $\bigcap_{Q_{\alpha}\in \Delta}Q_{\alpha}$ is a primary hyperideal of $R$ and each prime hyperideal is also primary, we conclude that $\bigcap_{Q_{\alpha}\in \Delta}Q_{\alpha} \subseteq B = A \cap R \subseteq Q_{\alpha}$, and since $B$ is prime, we get $\sqrt{\bigcap_{Q_{\alpha}\in \Delta}Q_{\alpha}} \subseteq B \subseteq Q_{\alpha}$, and hence $\sqrt{\bigcap_{Q_{\alpha}\in \Delta}Q_{\alpha}} \subseteq B \subseteq \bigcap_{Q_{\alpha}\in \Delta}Q_{\alpha}$. Thus $\sqrt{\bigcap_{Q_{\alpha}\in \Delta}Q_{\alpha}} = \bigcap_{Q_{\alpha}\in \Delta}Q_{\alpha}$. Now, let $Q_{\beta}, Q_{\gamma} \in \Delta \subseteq \Theta$. Then $Q_{\gamma} = Q_{\beta}\cap Q_{\gamma} \in \Delta \subseteq \Theta$. It follows from Proposition \ref{3.5} (e) that $\sqrt{\bigcap_{Q_{\alpha}\in \Delta}Q_{\alpha}} = \bigcap_{Q_{\alpha}\in \Delta}Q_{\alpha}$ is a prime hyperideal of $R$.
}
\end{proof}





\begin{thebibliography}{}{\small

\bibitem {Atiyah}  M.F. Atiyah and I.G. MacDonald, Introduction to Commutative Algebra, Addison-Wesley, (1969).

\bibitem {Corsini} P. Corsini, Prolegomena of Hypergroup Theory, Second edition, Aviani editor, Italy (1993).

\bibitem {Leoreanu} P. Corsini and V. Leoreanu, Applications of Hyperstructure Theory, Advances in Mathematics, 5, Kluwer Academic Publishers, Dordrecht, 2003.

\bibitem {isomorphism} B. Davvaz, Isomorphism theorems on hyperrings, Indian J. Pur. Appl. Math., 35 (3) (2004), 321--331.

\bibitem {Davvaz} B. Davvaz and V. Leoreanu-Fotea, Hyperring theory and applications, International Accademic Press, Palm Harbor, U.S.A., 2007.

\bibitem {Musavi} B. Davvaz and T. Musavi, Codes over hyperrings, Matematicki Vesnik, 68 (2016), 26--38.

\bibitem {Davvaz3} B. Davvaz, Polygroup Theory and Related Systems, World Sci. Publ., Co. Pte. Ltd.,
Hackensack, NJ, 2013.

\bibitem {Salasi} B. Davvaz and A. Salasi, A realization of hyperrings, Comm. Algebra, 34 (12) (2006), 4389--4400.

\bibitem {Davvaz2} B. Davvaz, Some Results on Congruences in Semihypergroups, Bull. Malays. Math. Sci. Soc., 23(2), (2000), 53--58.

\bibitem {DV} B. Davvaz and T. Vougiouklis, Commutative rings obtained from hyperrings (Hv-rings) with a$^*$-relations, Commun. Algebra, 35 (11) (2007), 3307--3320.

\bibitem {Harijani} K.H. Harijani and S. Anvariyeh, A hypervaluation of a hyperfield onto a totally ordered canonical hypergroup, Studia Scientiarum Mathematicarum Hungarica, 52.1 (2015), 87--101.

\bibitem {Swanson} C. Huneke and I. Swanson, Integral Closure of Ideals, Rings, and Modules, London Mathematical Society Lecture Note Series, 336. Cambridge University Press, Cambridge, (2006).

\bibitem {Krasner} M. Krasner, A class of hyperring and hyperfields, Int J Math Math sci, 2 (1983), 307--312.

\bibitem {Larsen} M.D. Larsen and P.J. McCarthy, Multiplicative theory of ideals, Academic Press, New York, 1971.

\bibitem{Marty} F. Marty, Sur une generalization de la notion de groupe, $8^{iem}$ Congress Math. Scandinaves, Stockholm (1934), 45--49.

\bibitem {Ramaruban} N. Ramaruban, Commutative Hyperalgebra, PhD thesis, University of Cincinnati, 2013.

\bibitem {Vougiouklis} T. Vougiouklis, Hyperstructures and Their Representations, 115, Hadronic Press Inc., Florida, 1994.







}
\end{thebibliography}
\end{document}